\theoremstyle{plain}
\newtheorem{theorem}{Theorem}[section]
\newtheorem{corollary}[theorem]{Corollary}
\newtheorem{lemma}[theorem]{Lemma}
\newtheorem{proposition}[theorem]{Proposition}
\theoremstyle{definition}
\newtheorem{defi}[theorem]{Definition}
\theoremstyle{remark}
\newtheorem{remark}[theorem]{Remark}
\newcommand{\C}{{\mathcal C}}
\newcommand{\D}{{\mathcal D}}
\newcommand{\M}{{\mathcal M}}
\newcommand{\W}{{\mathcal W}}
\newcommand{\calL}{{\mathcal L}}
\newcommand{\calS}{{\mathcal S}}
\newcommand{\holim}{\mathop{\textrm{\rm holim}}}
\newcommand{\colim}{\mathop{\textrm{\rm colim}}}
\newcommand{\hocolim}{\mathop{\textrm{\rm hocolim}}}
\newcommand{\Ho}{{\text{\rm Ho}}}
\newcommand{\map}{{\text{\rm map}}}
\newcommand{\Map}{{\text{\rm Map}}}
\newcommand{\Cyl}{{\text{\rm Cyl}}}
\newcommand{\sCat}{\text{\bf sCat}}
\newcommand{\Cat}{\text{\bf Cat}}
\newcommand{\wCat}{\text{\bf wCat}}
\numberwithin{equation}{section}
\begin{document}

\title{The hammock localization preserves homotopies}
\author{Oriol Ravent\'os}
\address{Fakult\"{a}t f\"{u}r Mathematik,
Universit\"{a}t Regensburg,
93040 Regensburg,
Germany}
\email{oriol.raventos-morera@ur.de}
\date{}

\thanks{The author is supported by the grant SFB 1085 ``Higher invariants'' founded by the German Research Foundation, the project CZ.1.07/2.3.00/20.0003 of the Operational Programme Education for Competitiveness of the Ministry
of Education, Youth and Sports of the Czech Republic, the Spanish Ministry
of Science and Innovation under the grant MTM2010-15831 and MTM-2013-42178-P, and the Government of Catalonia under the grant SGR-119-2009.}
\subjclass[2010]{Primary 55U35, 55P60; Secondary 18C35}
\keywords{Model category, homotopy function complex, localization, homotopy algebra}

\maketitle


\begin{abstract}
  The hammock localization provides a model for a homotopy function complex in any Quillen model category. We prove that a homotopy between a pair of morphisms induces a homotopy between the maps induced by taking the hammock localization. We describe applications of this fact to the study of homotopy algebras over monads and homotopy idempotent functors. Among other things, we prove that, under Vop{\v e}nka's principle, every homotopy idempotent functor in a cofibrantly generated model category is determined by simplicial orthogonality with respect to a set of morphisms. We also give a new proof of the fact that left Bousfield localizations with respect to a class of morphisms always exist in any left proper combinatorial model category under Vop{\v e}nka's principle.
\end{abstract}

\section{Introduction}

The hammock localization was introduced by Dwyer and Kan in a series of articles \cite{DK80b}, \cite{DK80a} and \cite{DK80c}. Given a category $\C$ with a fixed class of morphisms $\W$, the hammock localization $\calL^H\C$ is a simplicial category such that $\pi_0(\calL^H\C(X,Y))$ is the set of morphisms from $X$ to $Y$ in the category obtained by inverting the morphisms in $\W$ for every pair of objects $X$ and $Y$ in $\C$. In the case that $\C$ is a model category and $\W$ is its class of weak equivalences, then $\pi_0(\calL^H\C (X,Y))$ is in natural bijection with the set of homotopy classes of morphisms $[X,Y]$ and, as a bifunctor, $\calL^H\C(-,-)$ sends weak equivalences to weak homotopy equivalences. Hence, $\calL^H\C(-,-)$ defines a homotopy function complex on $\C$. Moreover, if $\C$ is a simplicial model category, with simplicial mapping space $\Map(-,-)$, then $\calL^H\C(X,Y)\simeq\Map(X^c,Y^f)$, where $X^c$ is a cofibrant replacement of $X$ and $Y^f$ is a fibrant replacement of $Y$.

In Theorem \ref{thm} we prove that $\calL^H\C(-,-)$ sends left or right homotopies to simplicial homotopies. This is applied in Section \ref{sec-homotopical localizations} to study homotopy idempotent functors. We recall that a (coaugmented) homotopy idempotent functor on a model category $\C$ is a functor $L\colon\C\to\C$ together with a natural transformation $\ell\colon 1\to L$ that induces a localization, i.e., a left adjoint of the inclusion of a reflective subcategory, in the homotopy category. An object $X$ is $L$-local if it is weakly equivalent to an object of the form $LY$ for some $Y$, and a morphism $f$ is an $L$-equivalence if $Lf$ is a weak equivalence. We prove in Proposition \ref{prop} that, in any model category, $L$-local objects and $L$-equivalences are simplicially orthogonal with respect to $\calL^H\C(-,-)$. If we assume a certain large cardinal axiom, called Vop{\v e}nka's principle, we prove in Corollary \ref{cor-cofib-gen1} that for each homotopy idempotent functor $(L,\ell)$ in any cofibrantly generated model category, the class of $L$-local objects correspond to the class of objects that are simplicially orthogonal to \emph{just a set} of morphisms. This result extends a previous result in \cite[Theorem~2.3]{CC06} for simplicial combinatorial model categories to all cofibrantly generated model categories. In the same spirit, we extend  in Theorem \ref{cell1} the analogous result for augmented homotopy idempotent functors \cite[Theorem~2.1]{Ch07}.

It was proved in \cite[Theorem~2.3]{RT03} that, under Vop{\v e}nka's principle, left Bousfield localizations with respect to a class of morphisms exist in any left proper combinatorial model category. We give a new proof of this fact in Corollary \ref{Bousf-combinatorial}. The proof can be easily modified to give the analogous result for right Bousfield localizations as we state in Corollary \ref{cell2}. This last result extends a previous result in \cite[Theorem~1.4]{Ch07} for simplicial combinatorial model categories to all combinatorial model categories.

The hammock localization $\calL^H$ can be extended to a functor from the category of small categories with weak equivalences to the category of small simplicial categories
$$
\calL^H\colon\wCat\longrightarrow \sCat
$$
as we make precise in Section \ref{sec-hammocks}. We prove that $\calL^H$ can be extended so as to send natural transformations to simplicial natural transformations \emph{up to homotopy} in Theorem \ref{thm-nat-transf}. Even if this does not make $\calL^H$ a strict $2$-functor, it is already useful for some applications. In Section \ref{sec-ho-alg}, we give an application to the study of homotopy algebras. Roughly, we transfer the property that every homotopy algebra is a homotopy retract of a free algebra to a statement about homotopy function complexes. This result is used in a joint paper of the author with Casacuberta and Tonks \cite{CaRaTo}, in which we study homotopy algebra structures preserved by localizations.

\subsection*{Acknowledgements} We want to thank Ilias Amrani, John Bourke, Carles Casacuberta, Javier Guti{\' e}rrez, Alexandru Stanculescu, George Raptis and Andrew Tonks for useful discussions during the preparation of this article.

\section{The hammock localization}

The hammock localization defines one model for the homotopy function complex of a model category. It was introduced by Dwyer and Kan in a series of articles \cite{DK80b}, \cite{DK80a} and \cite{DK80c}. We will summarize some of their results following the more recent exposition contained in \cite[Chapter~34]{DHKS04}.

A \emph{category with weak equivalences} is a pair $(\C,\W)$ with $\C$ a category and $\W$ a fixed class of morphisms in $\C$ that contains all identities. The morphisms in $\W$ are called \emph{weak equivalences}. Assume (just for the moment) that $\C$ is small. For every pair of objects $X$ and $Y$ in $\C$, and every odd natural number $n$, we define a category $\calL^H_n\C(X,Y)$ with objects being strings of $n$ morphisms on $\C$ in alternating directions
\begin{equation}\label{zig-zag}
\xymatrix{C_0 & \ar[l]_-{d_0} C_1 \ar[r]^-{d_1} & \dots \ar[r]^-{d_{n-2}} & C_{n-1} & \ar[l]_-{d_{n-1}} C_n, }
\end{equation}
with $X=C_0$, $Y=C_n$ and the arrows pointing to the left being weak equivalences. A~morphism is a commutative diagram of the form
$$
\xymatrix{ C_0 \ar@{=}[d] & \ar[l] C_1 \ar[r] \ar[d] & \dots \ar[r] & C_{n-1} \ar[d] & \ar[l] C_n \ar@{=}[d] \\
 C'_0 & \ar[l] C'_1 \ar[r] & \dots \ar[r] & C'_{n-1} & \ar[l] C'_n. }
$$
The \emph{hammock localization} of $(\C,\W)$ is a simplicial category (meaning simplicially enriched) $\calL^H\C$ with the same objects as $\C$ and, for every pair of objects $X$ and $Y$, a simplicial set
$$
\calL^H\C(X,Y)=\colim_n N \calL^H_n\C(X,Y),
$$
where the sequential colimit (that is also a homotopy colimit) is taken over the nerve of the embedding functors which send an object like (\ref{zig-zag}) in $\calL^H_n\C(X,Y)$ to
\begin{equation}\label{inclusion}
\xymatrix{X=C_0 & \ar[l] C_1 \ar[r]^{id} & C_1 & \ar[l]_{id} C_1 \ar[r] & \dots \ar[r] & C_{n-1} & \ar[l] C_n=Y}
\end{equation}
in $\calL^H_{n+2}\C(X,Y)$. The composition in $\calL^H\C$ is given by concatenation. More precisely, given an object
$$
\xymatrix{X & \ar[l] C_1 \ar[r] & \dots \ar[r] & C_{n-1} & \ar[l]_-{d_{n-1}} Y }
$$
in $\calL^H_n\C(X,Y)$ and an object
$$
\xymatrix{Y & \ar[l]_-{d'_{0}} C'_1 \ar[r] & \dots \ar[r] & C'_{n-1} & \ar[l] Z }
$$
in $\calL^H_n\C(Y,Z)$, their composition is the object
$$
\xymatrix@C=1.1cm{X & \ar[l] C_1 \ar[r] & \dots \ar[r] & C_{n-1} & \ar[l]_-{d_{n-1}\circ d'_0} C'_1 \ar[r] & \dots \ar[r] & C'_{n-1} & \ar[l] Z }
$$
in $\calL^H_{2n-1}\C(X,Z)$. To see that this is well defined, one uses the well known facts that filtered colimits commute with finite limits, that nerves commute with products and that the category of simplicial sets is a cartesian closed model category.

\begin{remark}
The hammock localization was originally defined using a colimit over all natural numbers, cf.\ \cite{DK80c}. We restrict to odd natural numbers because in this case the morphisms in the extremes are always going backwards and then we do not need to distinguish two cases in every proof. It can be seen that both definitions coincide using a cofinality argument, as proved in \cite[Chapter~34]{DHKS04}. It is also worth mentioning that if $\C$ is a model category and $\W$ is its class of weak equivalences, then $\calL^H\C(X,Y)\simeq\calL^H_3\C(X,Y)$ for every pair of objects $X$ and $Y$, cf.\ ~\cite{DK80c}. Although working with $\calL^H_3$ has certain advantages, for the purposes of this article it is more convenient to work with $\calL^H$ even in the case of model categories.
\end{remark}

\begin{remark}\label{Groth-remark}
We recall that the nerve of a category $\D$ is the simplicial set with $n$\nobreakdash-simplices $(N\D)_n=\Cat([n],\D)$ and this defines a fully faithful functor from $\Cat$ to the category of simplicial sets. We will often use the fact that natural transformations induce homotopies after taking nerves \cite[Section~1, Proposition~2]{Qu73}. In particular, adjunctions induce homotopy equivalences. It is also useful to know that, for any pair of objects $X$ and $Y$, $\calL^H\C(X,Y)$ is weakly equivalent to the nerve of the Grothendieck construction on the diagram defining the sequential colimit, as observed in \cite[Proposition 35.7]{DHKS04}.
\end{remark}

Let $\wCat$ denote the category of small categories with weak equivalences and morphisms being the functors that preserve weak equivalences. Then there is a functor
$$
\calL^H\colon \wCat \longrightarrow \sCat,
$$
where $\sCat$ is the category of small simplicial categories. The image of a functor in $\wCat$ is defined levelwise in each category $\calL^H_n\C(X,Y)$.

Notice, in particular, that for every morphism $f\colon A\to B$ in $\C$ there are induced maps of simplicial sets
$$
f^*\colon \calL^H\C(B,Y)\longrightarrow \calL^H\C(A,Y) \mbox{{\rm \, and \,}} f_*\colon \calL^H\C(X,A)\longrightarrow \calL^H\C(X,B).
$$
To be more precise, $f^*$ is induced by the functors $f^*_n$ that send an object like (\ref{zig-zag})
in $\calL^H_n\C(B,Y)$ to
$$
\xymatrix{A & \ar[l]_{id} A \ar[r]^-f & B & \ar[l] C_1 \ar[r] & \dots \ar[r] & C_{n-1} & \ar[l] Y }
$$
in $\calL^H_{n+2}\C(A,Y)$ for every odd natural number $n$. If $f$ is a weak equivalence, then $f^*$ is a weak homotopy equivalence and a homotopy inverse is given by the functors that send an object like (\ref{zig-zag}) in $\calL^H_n\C(A,Y)$ to
$$
\xymatrix{B & \ar[l]_-{f} A \ar[r]^-{id} & A & \ar[l] C_1 \ar[r] & \dots \ar[r] & C_{n-1} & \ar[l] Y }
$$
in $\calL^H_{n+2}\C(B,Y)$ for every odd natural number $n$. Indeed, if $f$ is a weak equivalence, then $f^*_n$ is an equivalence of categories for each $n$. The map $f_*$ is defined similarly.

If $\C$ is a model category we will let $\W$ be exactly the class of weak equivalences in $\C$. In this case, $\pi_0(\calL^H\C(X,Y))\cong\Ho(\C)(X,Y)$ and $\calL^H\C(X,Y)$ defines a \emph{homotopy function complex} (or \emph{homotopy mapping space}) for $\C$, cf.\ \cite[Chapter~17]{Hi03}.

We would like to apply the hammock localization not only to small categories. This has some technical set theoretical issues that can be nicely handled using the axiomatization of universes. We refer to \cite[Section~32]{DHKS04} for a detailed explanation.

\section{A property of the hammock localization}\label{sec-hammocks}

The following result asserts that the hammock localization respects homotopies. For the basic properties of homotopies in model categories we refer to \cite[Chapter~7]{Hi03}. As usual, by \emph{simplicial homotopy} in a simplicial model category we mean the equivalence relation generated by the strict homotopies.

\begin{theorem}\label{thm}
Let $\C$ be a model category and let $f$ and $g$ be left or right homotopic morphisms in $\C$. Then $f_*$ and $g_*$ are simplicially homotopic maps, and $f^*$ and $g^*$ are simplicially homotopic maps.
\end{theorem}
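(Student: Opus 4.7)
The plan is to reduce the theorem to the statement that $[f]$ and $[g]$, viewed as $0$\nobreakdash-simplices of $\calL^H\C(A,B)$, are simplicially homotopic, and then invoke functoriality of the simplicial composition of $\calL^H\C$. It suffices to treat the case of a left homotopy inducing $f_*$ and $g_*$; the right-homotopy case is dual (using a path object in place of a cylinder object) and the $f^*, g^*$ case is formally the same with precomposition in place of postcomposition.

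Given a left homotopy $H\colon\Cyl(A)\to B$ with $H i_0=f$, $H i_1=g$, together with the weak equivalence $\sigma\colon\Cyl(A)\to A$ satisfying $\sigma i_0=\sigma i_1=\mathrm{id}_A$, I will work inside the category $\calL^H_5\C(A,B)$. Consider the objects
$$\phi_\epsilon\colon\ A\xleftarrow{\mathrm{id}}A\xrightarrow{i_\epsilon}\Cyl(A)\xleftarrow{\mathrm{id}}\Cyl(A)\xrightarrow{H}B\xleftarrow{\mathrm{id}}B\quad(\epsilon=0,1),$$
the ``bridge''
$$\psi\colon\ A\xleftarrow{\mathrm{id}}A\xrightarrow{\mathrm{id}}A\xleftarrow{\sigma}\Cyl(A)\xrightarrow{H}B\xleftarrow{\mathrm{id}}B,$$
and the images $\iota[f]$, $\iota[g]$ under the embedding (2.2) of the canonical $3$-zig-zag representatives of $f$ and $g$. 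All backward arrows are weak equivalences, so these are legitimate objects of $\calL^H_5\C(A,B)$. A routine check shows that the tuples of vertical maps $(\mathrm{id},\mathrm{id},i_\epsilon,i_\epsilon,\mathrm{id},\mathrm{id})$ and $(\mathrm{id},\mathrm{id},\sigma,\mathrm{id},\mathrm{id},\mathrm{id})$ assemble into a zig-zag of morphisms
$$\iota[f]\longrightarrow\phi_0\longrightarrow\psi\longleftarrow\phi_1\longleftarrow\iota[g]$$
in $\calL^H_5\C(A,B)$, each required commutative square being forced by exactly one of the identities $Hi_\epsilon=f,g$ or $\sigma i_\epsilon=\mathrm{id}_A$. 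By Remark 2.2 each morphism becomes a $1$-simplex of $N\calL^H_5\C(A,B)$ and hence, after passing to the colimit over odd $n$, a $1$-simplex of $\calL^H\C(A,B)$; the resulting chain of four $1$-simplices witnesses, via the paper's convention of simplicial homotopy as the equivalence relation generated by strict homotopies, that $[f]$ and $[g]$ are simplicially homotopic in $\calL^H\C(A,B)$.

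Finally, since $\calL^H\C$ is simplicially enriched the composition is a map of simplicial sets $\calL^H\C(A,B)\times\calL^H\C(X,A)\to\calL^H\C(X,B)$. Regarding each $1$-simplex appearing in the zig-zag above as a map $\Delta[1]\to\calL^H\C(A,B)$ and pairing it with the identity of $\calL^H\C(X,A)$ via this composition produces a strict simplicial homotopy $\calL^H\C(X,A)\times\Delta[1]\to\calL^H\C(X,B)$ between the corresponding induced $*$-maps. Concatenating the four strict homotopies arising from the zig-zag gives the desired simplicial homotopy between $f_*$ and $g_*$; the argument for $f^*$ and $g^*$ is identical with the other composition direction, and the right-homotopy case is obtained by dualising, replacing $\Cyl(A)$ by a path object of $B$.

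The key obstacle is designing the bridging object $\psi$ so that \emph{both} $\phi_0$ and $\phi_1$ admit morphisms into $\psi$ via a single uniform pattern of vertical maps; this requires $\sigma$ to retract $i_0$ and $i_1$ simultaneously, which is precisely the defining feature of a cylinder object. Once this combinatorial arrangement is found, the remainder of the proof is formal, combining Remark 2.2 with the observation that simplicial maps preserve the equivalence relation of simplicial homotopy.
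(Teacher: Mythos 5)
Your zig-zag in $\calL^H_5\C(A,B)$ is correct: all the squares you list do commute, and it does exhibit a chain of $1$-simplices joining the vertices $\iota[f]$ and $\iota[g]$ in $\calL^H\C(A,B)$. The gap is in the final step, where you transport this along the composition. Pairing a $1$-simplex $e$ of $\calL^H\C(A,B)$ with the identity of $\calL^H\C(X,A)$ under the composition $\mu$ gives a strict homotopy between $\mu(e(0),-)$ and $\mu(e(1),-)$, i.e.\ between the maps ``compose with the vertex''. But in the model of $\calL^H$ used in this paper these are \emph{not} the maps $f_*$ and $g_*$ of the statement: $f_*$ sends a length-$n$ hammock to the length-$(n+2)$ hammock obtained by appending $A\xrightarrow{f}B\xleftarrow{id}B$, whereas $\mu(\iota[f],-)$ first pushes $\iota[f]$ up to length $n$ by inserting identity pairs at $C_1$ (as in (\ref{inclusion})) and then concatenates, landing in length $2n-1$ with the identity padding sitting next to $A$. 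Since the colimit defining $\calL^H\C(X,B)$ identifies hammocks only along insertions of identities at $C_1$, these two representatives are not identified for $n>3$, so $\mu(\iota[f],-)\neq f_*$ as simplicial maps. Because the conclusion is only ``simplicially homotopic'', it would suffice to prove $f_*\simeq\mu(\iota[f],-)$ (and likewise for $g$, and for the contravariant maps), but that is precisely a statement you would have to prove by a levelwise zig-zag of natural transformations compatible with the transition functors -- an extension of Remark \ref{inclusion-remark} of essentially the same nature and length as the paper's own proof, which instead constructs directly, for each odd $n$, natural transformations $f_*^n\to\tilde H^n\leftarrow g_*^n$ between functors $\calL^H_n\C(X,A)\to\calL^H_{n+2}\C(X,B)$ out of the cylinder, checks compatibility with the inclusions, and applies the nerve and the colimit.

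A second, related point: your route leans on the concatenation composition being a strictly well-defined simplicial map on the colimits. In this presentation of $\calL^H$ that is itself a delicate point (the paper only sketches it), and a naive levelwise check shows that ``concatenate with a fixed hammock'' is not strictly compatible with the transition functors, for exactly the same reason as above. The paper's proof is arranged so as never to invoke the enriched composition at all. So the idea of producing a path from $[f]$ to $[g]$ and pushing it around by composition can be made to work, but as written the proof has a genuine hole at the identification of the endpoint maps of your homotopies with $f_*$, $g_*$ (and $f^*$, $g^*$), and closing it requires the same kind of levelwise argument that constitutes the paper's proof.
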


\begin{proof}
Assume that $f$, $g\colon A\to B$ are left homotopic. Fix a cylinder object
$$
\xymatrix@C=1cm{A\coprod A \ar[r]^-{i_0\coprod i_1} & \Cyl(A) \ar[r]^-p & A}
$$
where $p\circ i_0=p\circ i_1=id$ and $i_0$, $i_1$ and $p$ are weak equivalences. Let $H\colon \Cyl(A)\to B$ be a left homotopy between $f$ and $g$. Thus, $H\circ i_0 =f$ and $H\circ i_1= g$. For every object like (\ref{zig-zag}) in $\calL_n^H\C(X,A)$, the commutative diagram
$$
\xymatrix@C=1cm{ X \ar@{=}[d] & \ar[l] C_1 \ar[r] \ar@{=}[d] & \dots \ar[r] & C_{n-1} \ar@{=}[d] & \ar[l]_-{d_{n-1}} A \ar[d]^{i_0} \ar[r]^-{f} & B \ar@{=}[d] & \ar[l]_{id} B \ar@{=}[d] \\
X & \ar[l] C_1 \ar[r] & \dots \ar[r] & C_{n-1} & \ar[l]_-{d_{n-1}\circ p} \Cyl(A) \ar[r]^-{H} & B & \ar[l]_{id} B \\
 X \ar@{=}[u] & \ar[l] C_1 \ar[r] \ar@{=}[u] & \dots \ar[r] & C_{n-1} \ar@{=}[u] & \ar[l]_-{d_{n-1}} A \ar[u]_{i_1} \ar[r]^-{g} & B \ar@{=}[u] & \ar[l]_{id} B \ar@{=}[u]}
$$
determines a zig-zag of natural transformations
$$
\xymatrix{f_*^n \ar[r]^{\phi^n} & \tilde{H}^n & \ar[l]_{\psi^n} g_*^n}
$$
between functors from $\calL^H\C_{n}(X,A)$ to $\calL^H\C_{n+2}(X,B)$ for each odd natural number $n$, which are compatible with the inclusions
$$
\calL^H\C_n(X,A)\to \calL^H\C_{n+2}(X,A).
$$
Now let $\tilde{H}=\colim_n N\tilde{H}^n$. Since the nerve functor sends natural transformations to simplicial homotopies and $\phi^n$ and $\psi^n$ are compatible with the colimit, we have an induced zig-zag of homotopies of simplicial sets $f_*\simeq \tilde{H}\simeq g_*$.

It $f$ and $g$ are right homotopic, then the statement can be proved similarly using path objects.
\end{proof}

The following result describes the $2$\nobreakdash-categorical properties of the hammock localization functor.

\begin{theorem}\label{thm-nat-transf}
Given a natural transformation $\eta\colon F\to G$ between functors $F, G\colon \C\to\D$ in $\wCat$, there is a homotopy
$\calL^H \eta (X,Y)$ from $\eta_{Y*}\circ \calL^H F(X,Y)$ to $\eta_{X}^*\circ \calL^H G(X,Y)$ for each pair of objects $X$ and $Y$ in $\C$:
$$
\xymatrix@R=.3cm@C=.2cm{\calL^H\C(X,Y)\ar[rr]^{\calL^H F(X,Y)} \ar[dd]_{\calL^H G (X,Y)} & & \calL^H\D(FX,FY) \ar[dd]^{\eta_{Y*}} \ar@{=}[dl] \\ & \calL^H\eta (X,Y) \ar@{=>}[dl] & \\ \calL^H\D(GX,GY)\ar[rr]^{\eta_X^*} & & \calL^H\D(FX,GY).}
$$
\end{theorem}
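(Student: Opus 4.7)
The plan is to mimic the strategy of Theorem~\ref{thm}: for each odd $n$, I construct a zig-zag of natural transformations between the functors $\Phi_n := \eta_{Y*}\circ\calL^H_n F(X,Y)$ and $\Psi_n := \eta_X^*\circ\calL^H_n G(X,Y)$, both regarded as functors $\calL^H_n\C(X,Y) \to \calL^H_{n+2}\D(FX,GY)$. Because the nerve sends natural transformations to simplicial homotopies (Remark~\ref{Groth-remark}), passing to the sequential colimit will then produce the desired simplicial homotopy $\calL^H\eta(X,Y)$ in the sense of the equivalence relation generated by the strict ones.

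To build the zig-zag, for each $0\le k\le n$ I introduce an intermediate functor $\Omega_n^{(k)}$ whose value on $\sigma=(X=C_0\leftarrow C_1\to\cdots\leftarrow C_n=Y)$ is the zig-zag in $\calL^H_{n+2}\D(FX,GY)$ obtained by applying $F$ to the first $k$ arrows of $\sigma$, switching from $FC_k$ to $GC_k$ via $\eta_{C_k}$, and applying $G$ to the remaining arrows; an auxiliary identity arrow is inserted next to $\eta_{C_k}$ on whichever side the alternation of arrow directions requires. By construction $\Omega_n^{(n)}=\Phi_n$ and $\Omega_n^{(0)}=\Psi_n$.

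The heart of the argument is to compare $\Omega_n^{(k+1)}$ with $\Omega_n^{(k)}$ for each $0\le k\le n-1$, using the naturality square $Gd_k\circ\eta_{C_{k+1}}=\eta_{C_k}\circ Fd_k$. When $k$ is even, the morphism $d_k$ points from $C_{k+1}$ to $C_k$, the two zig-zags agree outside two adjacent positions, and the naturality square produces a direct natural transformation $\Omega_n^{(k+1)}\Rightarrow\Omega_n^{(k)}$ whose only non-identity verticals are $Fd_k$ and $Gd_k$. When $k$ is odd, $d_k$ points forward and no such direct transformation exists; I bridge through an auxiliary zig-zag $\Omega_n^{\prime(k+1/2)}$ of the same length whose three middle positions are all $GC_{k+1}$ (reached from $FC_{k+1}$ by $\eta_{C_{k+1}}$ and from $GC_k$ by $Gd_k$), producing a zig-zag $\Omega_n^{(k+1)}\Rightarrow\Omega_n^{\prime(k+1/2)}\Leftarrow\Omega_n^{(k)}$ with verticals $\eta_{C_{k+1}}$ and $Gd_k$ justified by the same naturality square. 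Concatenating for $k=n-1,\ldots,0$ gives a zig-zag connecting $\Phi_n$ and $\Psi_n$, which is natural in $\sigma$ because the verticals are morphisms of $\sigma$ and the commutative-square condition defining a morphism in $\calL^H_n\C(X,Y)$ is preserved by $F$ and $G$.

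The main technical obstacle I anticipate is verifying that the construction is compatible with the embeddings $\calL^H_n\to\calL^H_{n+2}$ so that the zig-zags at different levels assemble coherently: the embedding duplicates position~$1$, which shifts the switching index of $\Omega_n^{(k)}$ and can create edge effects when $k$ is small. Handling this requires either careful reindexing of the intermediates at each level or, more conceptually, working on the Grothendieck construction of Remark~\ref{Groth-remark}, where the family of zig-zags for varying $n$ can be packaged into a single zig-zag of natural transformations between functors whose nerves recover $\eta_{Y*}\circ\calL^HF(X,Y)$ and $\eta_X^*\circ\calL^HG(X,Y)$; its image in $\calL^H\D(FX,GY)$ is $\calL^H\eta(X,Y)$.
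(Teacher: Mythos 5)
Your level-by-level construction is essentially sound: the interpolating functors $\Omega_n^{(k)}$, the even/odd case analysis on $d_k$, and the bridge through the common composite $\eta_{C_{k+1}}\circ Fd_k=Gd_k\circ\eta_{C_k}$ all work, and they exploit the same naturality mechanism the paper uses. The problem is that the step you flag as a ``technical obstacle'' is the crux of the proof, and you leave it unresolved. Because you switch from $F$ to $G$ one position at a time, your zig-zag of natural transformations at level $n$ has length growing with $n$, and the embedding $\calL^H_n\C(X,Y)\hookrightarrow\calL^H_{n+2}\C(X,Y)$ triples position $1$ and shifts the switching index, so the level-$(n+2)$ zig-zag does not restrict to the level-$n$ one along the structure maps. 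Since $\calL^H\D(FX,GY)$ is not fibrant, having a homotopy on each stage of a sequential colimit of cofibrations does not yield a homotopy on the colimit; one needs homotopies that are actually compatible with the inclusions, exactly as in the proof of Theorem \ref{thm}. Your proposed repair via the Grothendieck construction of Remark \ref{Groth-remark} does not dispose of this: a natural transformation (or zig-zag of them) defined on the Grothendieck construction must have components natural with respect to the transition morphisms, which is precisely the compatibility you are missing, and the ``single zig-zag'' packaging cannot exist as described because its length would have to vary with $n$; moreover it would only compare the two maps after composition with the comparison weak equivalence, which for a non-fibrant target does not immediately give the stated homotopy on $\colim_n N\calL^H_n\C(X,Y)$.

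The paper's proof is designed to make this assembly automatic: instead of switching one index at a time, it writes down, for each odd $n$, a \emph{single} morphism in $\calL^H_{n+4}\D(FX,GY)$, natural in the hammock (\ref{zig-zag}), whose top row is the $F$-image of the hammock with two identities inserted at $FX$ and with $\eta_Y$ (plus an identity) appended, whose bottom row is the $G$-image with $\eta_X$ (plus an identity) prepended and two identities inserted at $GY$, and whose vertical components are $\eta_X,\eta_{C_1},\dots,\eta_{C_{n-1}},\eta_Y$; every square commutes by naturality simultaneously. This one natural transformation is uniform in $n$ and strictly compatible with the stabilization maps, and the only remaining bookkeeping --- that inserting the identity pair at $C_0$ or at $C_n$ instead of at $C_1$ agrees with the standard inclusion (\ref{inclusion}) up to a zig-zag of natural transformations compatible with the colimit --- is exactly Remark \ref{inclusion-remark}. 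To complete your argument, the cleanest fix is to compose your elementary naturality squares into this single ladder (one morphism of hammocks rather than a chain of $\sim n$ homotopies); as written, the proposal has a genuine gap at the passage to the colimit.
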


Notice that, for $\calL^H$ to be a strict $2$\nobreakdash-functor, $\calL^H \eta$ would have to define a simplicially enriched natural transformation, i.e., $\calL^H \eta (X,Y)$ would have to be the identity for each pair of objects $X$ and $Y$. Since the simplicial categories in the image of $\calL^H$ are locally nerves of categories (see Remark \ref{Groth-remark}), we can think of them as being $2$-categories. In this sense, $\calL^H\eta$ in Theorem \ref{thm-nat-transf} will define an oplax natural transformation. Because the oplax natural transformations are the $2$-cells of the oplax-Gray category structure on $2$-$\Cat$, i.e., it is enriched with respect to the oplax-Gray tensor product \cite{Gr74}, we can think of $\calL^H$ as a weak map between oplax-Gray categories.

\begin{remark}\label{inclusion-remark}
In the proof of Theorem \ref{thm-nat-transf} we will need the fact that the inclusion described in (\ref{inclusion}), $\calL^H_n\C(X,Y)\hookrightarrow\calL^H_{n+2}\C(X,Y)$, by inserting two consecutive identity morphisms in $C_1$ is related by a zig-zag of natural transformations to the inclusion defined by inserting two consecutive identity morphisms in $C_i$ for any $0\leq i\leq n$.
\end{remark}

\begin{proof}[Proof of Theorem {\rm \ref{thm-nat-transf}.}]
Fix an object like (\ref{zig-zag}) in $\calL^H_n\C(X,Y)$. The homotopy $\calL^H \eta(X,Y)$ is described by the natural transformation defined by the morphisms
$$
\xymatrix@C=0.8cm{FX \ar@{=}[d] & \ar[l]_{id} FX \ar@{=}[d] \ar[r]^{id} & FX \ar[d]^{\eta_X} & \ar[l]_-{Fd_0} FC_1 \ar[d]^{\eta_{C_1}} \ar[r] & \dots \ar[r] & FC_{n-1} \ar[d]^{\eta_{C_{n-1}}} & \ar[l]_-{Fd_{n-1}} FY \ar[d]^{\eta_Y} \ar[r]^-{\eta_Y} & GY \ar@{=}[d] & \ar[l]_{id} GY \ar@{=}[d] \\ FX & \ar[l]_{id} FX \ar[r]^{\eta_X} & GX & \ar[l]_-{Gd_0} GC_1 \ar[r] & \dots \ar[r] & GC_{n-1} & \ar[l]_-{Gd_{n-1}} GY \ar[r]^{id} & GY & \ar[l]_{id} GY }
$$
in $\calL_{n+4}^H\D(FX,GY)$ for each odd number $n$.
\end{proof}

\section{Homotopy algebras over monads}\label{sec-ho-alg}

Recall that a \emph{monad} on a category $\C$ is a triple
$$
T\colon\C\to\C, \quad \eta\colon 1\to T, \quad \mbox{\rm and} \quad \mu\colon T T\to T
$$
such that $\mu\circ T\mu=\mu\circ \mu T$ and $\mu\circ T\eta=\mu\circ\eta T={\rm id}_T$, cf.\ \cite[Chapter~4]{Bo94II}. Any adjunction $F\colon \C\leftrightarrows\D: G$ induces a monad structure on $G F$. A \emph{$T$-algebra} over a monad $(T,\eta,\mu)$ is a pair $(X,a)$ with $a\colon T X\to X$ a morphism in $\C$ such that $a\circ \eta_X={\rm id}$ and $a\circ\mu_X=a\circ Ta$. There is a category of $T$-algebras $\C^T$, also known as the \emph{Eilenberg--Moore category}, together with an adjunction
\[
F : \C\leftrightarrows \C^T: U
\]
such that $U F = T$. The functor $FX=(TX,\mu_X)$ is the free $T$-algebra functor and $U(X,a)=X$ is the forgetful functor.

Given a monad $(T,\eta,\mu)$ on a model category $\M$ that induces a monad on the homotopy category, a \emph{homotopy $T$\nobreakdash-alg\-ebra} is an object of the Eilenberg--Moore category $\Ho(\M)^T$.
A homotopy $T$\nobreakdash-algebra can be thought of as a fibrant and cofibrant object $X$ in $\M$ equipped with a morphism $a\colon TX\to X$ such that $a\circ\eta_X\simeq{\rm id}$ and $a\circ \mu_X\simeq a\circ Ta$. Homotopy $T$\nobreakdash-algebras do not need to agree (not even up to homotopy) with strict $T$\nobreakdash-algebras in $\M$. For this reason, the usual techniques for studying algebras are not always useful in studying homotopy algebras. Such difficulties arose in the joint work of the author in \cite{CaRaTo}. In particular, the following result was needed.

\begin{theorem}\label{ho-alg}
Let $\M$ be a model category and let $(T,\eta,\mu)$ be a monad on $\M$ preserving weak equivalences. Then $\map(f,X)$ is a homotopy retract of $\map(Tf,X)$ for every homotopy $T$\nobreakdash-algebra~$(X,a)$.
\end{theorem}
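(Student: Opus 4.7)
The plan is to exhibit explicit maps of arrows $i$ and $r$ in the arrow category of simplicial sets and to show that $r\circ i$ is homotopic to the identity by combining Theorems \ref{thm} and \ref{thm-nat-transf}. Throughout, write $f\colon A\to B$ and use the hypothesis that $T$ preserves weak equivalences to extend $T$ to $\calL^H\M$, giving maps $\calL^H T\colon\map(Y,Z)\to\map(TY,TZ)$.

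I would first define $i\colon\map(f,X)\to\map(Tf,X)$ as the square whose horizontal arrows are $a_*\circ\calL^H T$, informally $g\mapsto a\circ Tg$. Functoriality of $T$ makes this square commute strictly, since $a\circ T(g\circ f)=a\circ Tg\circ Tf$. The retraction $r\colon\map(Tf,X)\to\map(f,X)$ has horizontal arrows $\eta_B^*$ and $\eta_A^*$, and its square commutes up to a natural homotopy induced by the identity $\eta_B\circ f=Tf\circ\eta_A$.

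Next I would show that, for each $Y\in\{A,B\}$, the composite $\eta_Y^*\circ a_*\circ\calL^H T\colon\map(Y,X)\to\map(Y,X)$ is simplicially homotopic to the identity. Since prepending and appending to zig-zags commute as operations on hammocks, this composite equals $a_*\circ\eta_Y^*\circ\calL^H T$. Theorem \ref{thm-nat-transf} applied to $\eta\colon\mathrm{id}\to T$ with source $Y$ and target $X$ provides a homotopy $\eta_Y^*\circ\calL^H T\simeq(\eta_X)_*$, and post-composing with $a_*$ yields
$$
a_*\circ\eta_Y^*\circ\calL^H T\simeq a_*\circ(\eta_X)_* = (a\circ\eta_X)_*,
$$
using the functoriality of $\calL^H$. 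Since $(X,a)$ is a homotopy $T$-algebra, $a\circ\eta_X\simeq\mathrm{id}_X$, and Theorem \ref{thm} then delivers $(a\circ\eta_X)_*\simeq\mathrm{id}$, finishing the argument componentwise.

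The main obstacle is to verify that the two homotopies obtained for $Y=A$ and $Y=B$ assemble into a single homotopy of maps of arrows $r\circ i\simeq\mathrm{id}_{\map(f,X)}$, rather than being merely two independent componentwise homotopies. This is automatic because both Theorems \ref{thm} and \ref{thm-nat-transf} produce their homotopies through natural constructions on zig-zags, respectively from a fixed cylinder and left homotopy on $X$, and from $\eta$ applied levelwise; these are manifestly natural in the source variable. Precomposing with $f$ therefore intertwines the homotopies on $\map(B,X)$ and $\map(A,X)$, exhibiting $\map(f,X)$ as a homotopy retract of $\map(Tf,X)$.
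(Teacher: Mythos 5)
Your proposal is correct and follows essentially the same route as the paper's proof: the section is $a_*\circ\calL^H T$, the retraction is induced by $\eta$, the up-to-homotopy square comes from Theorem \ref{thm-nat-transf} applied to $\eta\colon\mathrm{id}\to T$, the strict square from the compatibility of pre- and post-composition of hammocks, and the final homotopy $(a\circ\eta_X)_*\simeq\mathrm{id}$ from Theorem \ref{thm}. Your closing paragraph on assembling the componentwise homotopies into a homotopy of maps of arrows makes explicit a compatibility that the paper leaves implicit, which is a welcome (and correct) addition rather than a deviation.
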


\begin{proof}
Choose as a model for $\map(-,-)$ the hammock localization $\calL^H\M(-,-)$. There is a diagram
$$
\xymatrix@C=0.4cm@R=0.2cm{\calL^H\M(f,X) \ar[rrr]^-{\calL^H T(f,X)} \ar[ddd]_-{id} & & & \calL^H\M(Tf,TX) \ar[ddd]^-{(\eta_f)^*} \ar[rrr]^-{a_*}
 & & & \calL^H\M(Tf,X) \ar[ddd]^-{(\eta_f)^*} \\ & & \ar@{<=}[dl] & & & \ar@{=}[dl] & \\ & & & & & & \\
\calL^H\M(f,X) \ar[rrr]^-{(\eta_X)_*} & & & \calL^H\M(f,TX) \ar[rrr]^-{a_*} & & & \calL^H\M(f,X)
}
$$
in which the left square commutes only up to homotopy by Theorem \ref{thm-nat-transf} and the right square commutes by the enriched associativity law in $\calL^H \M$. Now, since $a\circ\eta_X\simeq{\rm id}$ and the hammock localization preserves homotopies by Theorem \ref{thm}, $a_*\circ (\eta_X)_*$ is homotopic to the identity. This tells us that $\map(f,X)$ is indeed a homotopy retract of $\map(Tf,X)$.
\end{proof}

In \cite[Section~9]{CaRaTo}, it is studied the invariance of homotopy $T$-algebras under $f$\nobreakdash-localiza\-tions (see Section \ref{sec-homotopical localizations} for the definition). In particular, Theorem \ref{ho-alg} is used to prove the following statement: If $f$ is a morphism in $\M$ such that the localizations $L_f$ and $L_{Tf}$ exist and $T$ preserves $f$\nobreakdash-equivalences and $Tf$\nobreakdash-equivalences, then
$$
L_f X\simeq L_{Tf} X
$$
for every homotopy $T$\nobreakdash-algebra $X$. This result applies, for instance, in the case that $\M$ is the category of pointed simplicial sets and $T$ is the monad associated to a unital operad. In particular, we can take $T$ to be the infinite symmetric product, $\Omega\Sigma$ or $\Omega^\infty\Sigma^\infty$.

\section{Homotopy idempotent functors}\label{sec-homotopical localizations}

We next define an analogue of the notion of idempotent functor, cf.\ \cite[Section 4.2]{Bo94II}, in the context of model categories following \cite{CSS}.

\begin{defi}\label{ho-idempotent}
Let $\M$ be a model category. A functor
$L\colon\M\to\M$ together with a natural transformation $\ell\colon
1\to L$ is called \emph{(coaugmented) homotopy idempotent} if $L$ sends weak equivalences to weak equivalences and the natural
morphisms $\ell_{LX},L\ell_X\colon LX\to LLX$ are equal in the homotopy category $\Ho(\M)$ and both are
weak equivalences for every object $X$ in~$\M$.
\end{defi}


There is a notion of \emph{augmented} homotopy idempotent functor, also called \emph{cellularization}. All results in this section have analogues for the augmented case and the proofs can be easily transferred. At the end of the section, we will state the analogues of the two main results.

Given a homotopy idempotent functor $(L,\ell)$, a morphism $f$ in $\M$ is called an \emph{$L$\nobreakdash-equi\-va\-lence}
if $Lf$ is a weak equivalence, and a fibrant object $X$ in
$\M$ is called \emph{$L$\nobreakdash-local} if $X\simeq LY$ for
some $Y$ in $\M$. The class of $L$\nobreakdash-equivalences and
$L$\nobreakdash-local objects determine each other by orthogonality in the homotopy category
$\Ho(\M)$. This means that a morphism $g\colon X\to Y$ is an $L$-equivalence if and only if the morphism
$$
g^*\colon\xymatrix{[Y,Z] \ar[r]^{\cong} & [X,Z]}
$$
is an isomorphism for every $L$-local object $Z$, and a fibrant object $Z$ is $L$-local if and only if $g^*$ is an isomorphism for all $L$-equivalences $g$, cf.\ \cite[Proposition 2.10]{Ad75}.

We will prove in Proposition \ref{prop} that $L$\nobreakdash-equivalences and $L$\nobreakdash-local objects are also \emph{simplicially orthogonal} in the model category. Let us explain what this means. Fix a homotopy function complex $\map(-,-)$ in a model category $\M$ and let $\mathcal{S}$ be any class of morphisms in $\M$. A fibrant object $X$ in $\M$ is called \emph{$\mathcal{S}$\nobreakdash-local} if, for every morphism $f\colon A\to B$ in $\mathcal{S}$, the induced map of homotopy function complexes
$$
f^*\colon \map(B,X)\longrightarrow \map(A,X)
$$
is a weak homotopy equivalence. We denote by $\mathcal{S}^{\perp}$ the class of $\mathcal{S}$\nobreakdash-local objects and we call it the \emph{simplicial orthogonal complement} of $\mathcal{S}$. Similarly, for any class of objects $\mathcal{D}$ in $\M$, a morphism $f\colon A\to B$ is called a \emph{$\mathcal{D}$\nobreakdash-equivalence} if, for every $X\in\mathcal{D}$, $f^*$ is a weak homotopy equivalence. By an abuse of notation, we also denote by $\mathcal{D}^{\perp}$ the class of $\mathcal{D}$\nobreakdash-equivalences and we call it the \emph{simplicial orthogonal complement} of $\mathcal{D}$.

It is important to notice that these definitions do not depend on the choice of homotopy function complex \cite[Proposition 17.8.2]{Hi03}. We fix $\map(-,-)$ to be $\calL^H\C(-,-)$.

Recall from \cite[Definition 3.3.1]{Hi03} that the left Bousfield localization with respect to a class of morphisms $\calS$ on a model category $\M$ (if it exists) is a new model category structure $L_{\calS}\M$ on the same underlying category $\M$ with the same cofibrations and the weak equivalences being the $\calS^\perp$\nobreakdash-equivalences. In particular, if we consider the fibrant replacement functor in $L_{\calS}\M$, then it defines a homotopy idempotent functor on $\M$. We will show that, if we assume that Vop\v{e}nka's principle holds, then in any cofibrantly generated model category, a homotopy idempotent functor has the same local objects as a left Bousfield localization with respect to a set of morphisms.


\begin{lemma}\label{lemma-eta}
Let $\M$ be a model category and let $(L,\ell)$ be a homotopy idempotent functor on
$\M$. For every pair of objects $X$ and $Y$,
\begin{enumerate}
\item[{\rm 1.}] the map
$\calL^H L(X,LY)\colon \map(X, LY)\to \map(LX,LLY)$ is a simplicial homotopy equivalence, and

\item[{\rm 2.}] the map
$\ell_X^*\colon \map(LX, LY)\to \map(X,LY)$ is also a simplicial homotopy equivalence.
\end{enumerate}
\end{lemma}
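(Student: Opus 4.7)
The plan is to derive both statements by applying Theorem \ref{thm-nat-transf} to the natural transformation $\ell\colon \mathrm{id}\to L$ at several different pairs of objects, combining this with the strict naturality of the simplicial functor $\calL^H L$ and with Theorem \ref{thm} to replace $L\ell_X$ by $\ell_{LX}$ up to simplicial homotopy. A short cancellation argument at the end separates out the two maps of interest.

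First I would handle the auxiliary case that $\calL^H L(LX, LY)$ is a simplicial homotopy equivalence. Applying Theorem \ref{thm-nat-transf} to $\ell$ at the pair $(LX,LY)$ yields a homotopy
$$
\ell_{LY*} \;\simeq\; \ell_{LX}^{*}\circ \calL^H L(LX,LY)\colon \calL^H\M(LX,LY)\to \calL^H\M(LX,LLY).
$$
By homotopy idempotence of $(L,\ell)$, both $\ell_{LX}$ and $\ell_{LY}$ are weak equivalences, hence (as recalled after the definition of the hammock localization) $\ell_{LX}^{*}$ and $\ell_{LY*}$ are simplicial homotopy equivalences, and cancellation forces $\calL^H L(LX,LY)$ to be one as well.

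Next, I would transfer this to the pair $(X,LY)$. Because $\calL^H L$ is defined levelwise by applying $L$ to zig-zags, it is strictly natural with respect to precomposition by $\ell_X\colon X\to LX$, giving
$$
\calL^H L(X,LY)\circ \ell_X^{*} \;=\; (L\ell_X)^{*}\circ \calL^H L(LX,LY).
$$
Since $L\ell_X\simeq \ell_{LX}$ in $\Ho(\M)$, Theorem \ref{thm} provides $(L\ell_X)^{*}\simeq \ell_{LX}^{*}$, so the right-hand side is a simplicial homotopy equivalence by the previous paragraph. In parallel, applying Theorem \ref{thm-nat-transf} to $\ell$ at the pair $(X,LY)$ gives
$$
\ell_{LY*} \;\simeq\; \ell_X^{*}\circ \calL^H L(X,LY)\colon \calL^H\M(X,LY)\to \calL^H\M(X,LLY),
$$
whose left-hand side is once more a simplicial homotopy equivalence. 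Thus both $\calL^H L(X,LY)\circ \ell_X^{*}$ and $\ell_X^{*}\circ \calL^H L(X,LY)$ are simplicial homotopy equivalences, even though the two occurrences of $\ell_X^{*}$ have different codomains.

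Finally, I would extract (1) and (2). From the second equation, $\calL^H L(X,LY)$ admits a left simplicial homotopy inverse; from the first equation, it admits a right simplicial homotopy inverse. Hence it is a simplicial homotopy equivalence, which is assertion (1). Then, since $\calL^H L(X,LY)\circ \ell_X^{*}$ is a simplicial homotopy equivalence and $\calL^H L(X,LY)$ is now known to be one, cancellation on the left yields that $\ell_X^{*}$ is a simplicial homotopy equivalence, giving (2). The main delicacy in the argument is bookkeeping: keeping straight the two different $\ell_X^{*}$'s, and converting the identity $L\ell_X\simeq \ell_{LX}$ in $\Ho(\M)$ into a simplicial homotopy of induced maps via Theorem \ref{thm}; once those are handled, the proof reduces to composing known equivalences.
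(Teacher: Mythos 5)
Your argument is correct in substance but follows a genuinely different route from the paper. The paper proves part (1) by explicitly constructing a candidate homotopy inverse $h$ to $\calL^H L(X,LY)$ at the level of hammocks and then building the homotopies $h\circ\calL^H L\simeq \mathrm{id}$ and $\calL^H L\circ h\simeq \mathrm{id}$ by hand, the second via cylinder objects and an actual left homotopy between $L\ell_X$ and $\ell_{LX}$ (which is why the paper first reduces, by homotopy invariance of $\map$, to the case where $LX$, $LLX$, $LLY$, $LLLY$ are fibrant and cofibrant); part (2) is then deduced from Theorem \ref{thm-nat-transf} exactly as in your last step. You instead obtain everything formally: Theorem \ref{thm-nat-transf} at the pair $(LX,LY)$ gives that $\calL^H L(LX,LY)$ is an equivalence by cancellation against $\ell_{LX}^*$ and $\ell_{LY*}$; the strict identity $\calL^H L(X,LY)\circ\ell_X^*=(L\ell_X)^*\circ\calL^H L(LX,LY)$ (which is indeed literally true for the hammock construction, since applying $L$ levelwise commutes with prepending a morphism) transfers this to $(X,LY)$; and Theorem \ref{thm-nat-transf} at $(X,LY)$ supplies the other composite, so $\calL^H L(X,LY)$ has one-sided homotopy inverses on both sides, and (2) follows by one more cancellation. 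This buys you a proof with no explicit hammock or cylinder constructions.

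One step as written is not justified: you invoke Theorem \ref{thm} to get $(L\ell_X)^*\simeq\ell_{LX}^*$ from the equality $L\ell_X=\ell_{LX}$ in $\Ho(\M)$, but Theorem \ref{thm} requires the two morphisms to be left or right homotopic, and equality in the homotopy category only yields this when $LX$ is cofibrant and $LLX$ is fibrant -- this is precisely the point at which the paper inserts its fibrant-cofibrant reduction. Fortunately the step is dispensable: Definition \ref{ho-idempotent} makes $L\ell_X$ itself a weak equivalence, so $(L\ell_X)^*$ is a simplicial homotopy equivalence directly, by the explicit inverse for maps induced by weak equivalences recalled in Section 2, and hence the right-hand side of your naturality identity is an equivalence without any comparison to $\ell_{LX}^*$. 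With that small repair your proof is complete, and it has the advantage over the paper's of never needing genuine left homotopies or any fibrancy/cofibrancy assumptions on the objects $LX$, $LLX$, $LLY$, $LLLY$.
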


\begin{proof}
For the first part, we let $h\colon \map(LX,LLY)\to\map(X,LY)$ be the map induced by the functors $h^n$ that send an object like (\ref{zig-zag}) in $\calL^H_n\C(LX,LLY)$ to
$$
\xymatrix@C=1.4cm{X & \ar[l]_{id} X \ar[r]^-{\ell_X} & LX & \ar[l] C_1 \ar[r] & \dots \ar[r] & C_{n-1} & \ar[l]_-{d_{n-1}\circ\ell_{LY}} LY}
$$
in $\calL^H_{n+2}\C(X,LY)$ for every odd natural number $n$. The homotopy from the identity (see Remark \ref{inclusion-remark}) to $h\circ \calL^H L(X,L Y)$ is determined by the commutative diagram
$$
\xymatrix@C=1.37cm{X \ar@{=}[d] & \ar[l]_{id} X \ar@{=}[d] \ar[r]^{id} & X \ar[d]_{\ell_X} & \ar[l] C_1 \ar[r] \ar[d]_{\ell_{C_1}} & \dots \ar[r] & C_{n-1} \ar[d]_{\ell_{C_{n-1}}} & \ar[l]_{d_{n-1}} LY \ar@{=}[d] \\ X & \ar[l]_{id} X \ar[r]^{\ell_X} & L X & \ar[l] L C_1 \ar[r] & \dots \ar[r] & L C_{n-1} & \ar[l]_-{ L d_{n-1}\circ \ell_{LY}} LY }
$$
in $\calL^H_{n+2}\C(X,LY)$ for every odd natural number $n$. We will now define a zig-zag of homotopies between the identity and $\calL^H L(X,L Y)\circ h$ induced by a zig-zag of natural transformations
\begin{equation}\label{H-zig-zag}
id^n\longrightarrow \tilde{H}^n \longleftarrow \calL^H_n L\circ h^n
\end{equation}
that are compatible with the inclusions
$$
\calL^H_n\C(LX,LLY)\longrightarrow \calL^H_{n+2}\C(LX,LLY).
$$
Since $\map(-,-)$ is homotopy invariant, we can assume that $LX$, $LLX$, $LLY$ and $LLLY$ are fibrant and cofibrant. Hence, there are two cylinder objects
$$
\xymatrix@R=0.2cm@C=1.1cm{LX\coprod LX \ar[r]^-{i_0\coprod i_1} & \Cyl(LX) \ar[r]^-p & LX & \mbox{\rm and}\\ LLY\coprod LLY \ar[r]^-{i'_0\coprod i'_1} & \Cyl(LLY) \ar[r]^-{p'} & LLY, & }
$$
a left homotopy $H\colon \Cyl(LX)\to LLX$ between $H\circ i_0 =L\ell_X$ and $H\circ i_1 =\ell_{LX}$, and a left homotopy $H'\colon \Cyl(LLY)\to LLLY$ between $H'\circ i'_0 =L\ell_{LY}$ and $H'\circ i'_1 =\ell_{LLY}$ (notice that $H'$ is forced to be a weak equivalence). Let $\tilde{H}^n$ be the functor that sends an object like (\ref{zig-zag}) in $\calL^H_n\C(LX,LLY)$ to
$$
\xymatrix@C=0.7cm{LX & \Cyl(LX) \ar[l]_-{p} \ar[r]^-{H} & LLX & \ar[l]_-{L d_0} L C_1 \dots L C_{n-1} & \ar[l]_-*!/_3pt/{\labelstyle L d_{n-1}\circ H'} \Cyl(LLY) \ar[r]^-{p'} & LLY & \ar[l]_-{id} LLY}
$$
in $\calL^H_{n+4}\C(LX,LLX)$. The diagram
$$
\xymatrix@C=0.7cm{ LX \ar@{=}[d] & LX \ar[l]_-{id }\ar[r]^-{id} \ar[d]_-{i_1} & LX \ar[d]_{\ell_{LX}} & \ar[l]_-{d_0} C_1 \ar@<-4ex>[d]_{\ell_{C_1}} \dots C_{n-1} \ar@<2ex>[d]^{\ell_{C_{n-1}}} & \ar[l]_-{d_{n-1}} LLY \ar[d]_{i'_1} \ar[r]^-{id} & LLY \ar@{=}[d] & \ar[l]_-{id} LLY \ar@{=}[d] \\ LX & \Cyl(LX) \ar[l]_-{p} \ar[r]^-{H} & LLX & \ar[l]_-{L d_0} L C_1 \dots L C_{n-1} & \ar[l]_-*!/_3pt/{\labelstyle L d_{n-1}\circ H'} \Cyl(LLY) \ar[r]^-{p'} & LLY & \ar[l]_-{id} LLY \\ LX \ar@{=}[u] & LX \ar[l]_-{id} \ar[r]^-{L\ell_{X}} \ar[u]^{i_0} & LLX \ar@{=}[u] & \ar[l]_-{L d_0} L C_1 \ar@{=}@<4ex>[u] \dots L C_{n-1} \ar@{=}@<-2ex>[u] & \ar[l]_-*!/_3pt/{\labelstyle L d_{n-1}\circ L\ell_{LY}} LLY \ar[u]^{i'_0} \ar[r]^-{id} & LLY \ar@{=}[u] & \ar[l]_-{id} LLY \ar@{=}[u]}
$$
in $\calL^H_{n+4}\C(LX,LLY)$ defines the zig-zag of natural transformations (\ref{H-zig-zag}) inducing the homotopy equivalence between $id$ and $\calL^H L\circ h$.

The second part of the statement follows from Theorem \ref{thm-nat-transf}, because $\ell$ induces a homotopy $\ell_{LY*}\simeq \ell_X^*\circ \calL^H(L) (X,LY)$, and $\ell_{LY*}$ and $\calL^H L(X,LY)$ are weak homotopy equivalences.
\end{proof}

\begin{proposition}\label{prop}
Let $\M$ be a model category and let $(L,\ell)$ be a homotopy idempotent functor on
$\M$. Then the class of $L$\nobreakdash-equivalences coincides with the simplicial
orthogonal complement of the class of $L$\nobreakdash-local objects.
\end{proposition}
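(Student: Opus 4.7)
The plan is to prove the two inclusions separately. For the forward direction, assume $f\colon A\to B$ is an $L$-equivalence and $X$ is $L$-local, so that there exists $Y$ with $X\simeq LY$. Since $\map(-,-)$ is homotopy invariant, I may assume $X=LY$. The key observation is that by naturality of $\ell$ we have $Lf\circ\ell_A=\ell_B\circ f$, so the square
$$
\xymatrix{\map(LB,LY)\ar[r]^-{\ell_B^*}\ar[d]_-{(Lf)^*} & \map(B,LY)\ar[d]^-{f^*}\\ \map(LA,LY)\ar[r]^-{\ell_A^*} & \map(A,LY)}
$$
commutes strictly, because both composites are induced by $Lf\circ\ell_A=\ell_B\circ f$ via the levelwise functoriality of $\calL^H$. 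By Lemma \ref{lemma-eta}(2) the horizontal maps are simplicial homotopy equivalences, and since $Lf$ is a weak equivalence in $\M$, the left vertical map $(Lf)^*$ is a weak homotopy equivalence by the discussion preceding Theorem \ref{thm}. Hence $f^*$ is a weak homotopy equivalence, showing that $f$ is simplicially orthogonal to $X$.

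For the reverse direction, suppose $f\colon A\to B$ is simplicially orthogonal to every $L$-local object. In particular, $\pi_0(f^*)\colon [B,X]\to [A,X]$ is a bijection for every $L$-local $X$, i.e., $f$ is orthogonal to all $L$-local objects in the homotopy category $\Ho(\M)$. By the already-recalled orthogonality characterization in $\Ho(\M)$ (cited from \cite[Proposition 2.10]{Ad75}), this implies that $f$ is an $L$-equivalence.

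I expect the only subtle point to be ensuring that the square in the forward direction commutes strictly; this is not a content obstacle but requires remembering that $g^*$ in the hammock localization is defined levelwise by precomposition and therefore respects composition of morphisms on the nose, so no appeal to Theorem \ref{thm-nat-transf} (which would only give homotopy commutativity) is needed here. The rest of the argument is a direct assembly of Lemma \ref{lemma-eta}, the homotopy invariance of $\calL^H\C(-,-)$ in each variable, and the classical homotopy-categorical orthogonality for homotopy idempotent functors.
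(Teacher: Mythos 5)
Your proof is correct, but it takes a genuinely different route from the paper in the converse direction, and it contains one technical inaccuracy worth fixing. For the forward implication you argue exactly as the paper does: the same square relating $f^*$ to $(Lf)^*$ via $\ell_A^*$ and $\ell_B^*$, with Lemma \ref{lemma-eta}(2) for the $\ell^*$ maps and the fact that the weak equivalence $Lf$ induces a weak homotopy equivalence on hammocks. For the converse, you pass to $\pi_0$, identify $\pi_0\map(-,-)$ with $\Ho(\M)(-,-)$, and invoke the classical orthogonality characterization for idempotent functors on the homotopy category (\cite[Proposition~2.10]{Ad75}), which the paper does record as background before the proposition, so this is legitimate and is in fact shorter. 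The paper instead stays at the level of homotopy function complexes: from the hypothesis and Lemma \ref{lemma-eta} it deduces that $\map(Lf,LA)$ and $\map(Lf,LB)$ are weak homotopy equivalences and concludes that $Lf$ is a weak equivalence by \cite[Proposition~17.7.6]{Hi03}. What the paper's route buys is that it is self-contained at the simplicial level and, in a third step of the same proof, also establishes the dual statement (a fibrant object orthogonal to all $L$-equivalences is $L$-local); that dual statement is not part of the literal proposition, but it is what is actually quoted later in the proof of Proposition \ref{prop-cofib-gen}, so your proof covers the statement but not that later use.

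The inaccuracy is your claim that the square commutes strictly. With the paper's definition, $g^*$ prepends the specific zig-zag $A \xleftarrow{id} A \xrightarrow{g} B$, so $\ell_A^*\circ (Lf)^*$ and $f^*\circ\ell_B^*$ prepend different zig-zags (one passing through $LA$, the other through $B$); the identity $Lf\circ\ell_A=\ell_B\circ f$ does not make these equal on the nose, since prepending twice is not the same object as prepending the composite once. They are, however, simplicially homotopic: at each level $n$ both functors map by a natural transformation to the functor prepending $A \xleftarrow{id} A \xrightarrow{\ell_B\circ f} LB \xleftarrow{id} LB \xrightarrow{id} LB$ (vertical maps $Lf$, respectively $\ell_B$, in the inserted columns), compatibly with the colimit inclusions, exactly as in the proofs of Theorem \ref{thm} and Lemma \ref{lemma-eta}. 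Homotopy commutativity is all that the two-out-of-three argument for weak homotopy equivalences needs, so your conclusion stands once this point is repaired; the paper's own ``commutative diagram'' should be read in the same homotopy-commutative sense.
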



\begin{proof}
We first prove that $L$\nobreakdash-local objects are simplicially orthogonal to $L$\nobreakdash-equi\-valences: Fix an object $LY$ and a morphism $f\colon A\to B$ such that $Lf$ is a weak equivalence. We want to prove that $\map(f,LY)$ is a weak homotopy equivalence. In the commutative diagram
$$
\xymatrix{\map(LB,LY)\ar[r]^{Lf^\ast} \ar[d]_{\ell_B^\ast} &
\map(LA,LY) \ar[d]_{\ell_A^\ast} \\ \map(B,LY)\ar[r]^{f^\ast} &
\map(A,LY)}
$$
the vertical arrows are weak homotopy equivalences by Lemma \ref{lemma-eta} and the top arrow is also a weak homotopy equivalence because $Lf$ is a weak equivalence. Hence, the bottom map has to be a weak homotopy equivalence.

If $f\colon A\to B$ is such that $\map(f,X)$ is a weak homotopy equivalence for each $L$\nobreakdash-local object $X$, using Lemma \ref{lemma-eta} we deduce that $\map(Lf,LA)$ and $\map(Lf,LB)$ are weak homotopy equivalences. Hence, $Lf$ must be a weak equivalence by \cite[Proposition 17.7.6]{Hi03}.

Finally, let $X$ be fibrant and such that $\map(f,X)$ is a weak homotopy equivalence for all $L$\nobreakdash-equi\-valences $f$. In particular, $\map(\ell_X,X)$ is a weak homotopy equivalence. On the other hand, $\map(\ell_X,LX)$ is a weak homotopy equivalence by Lemma \ref{lemma-eta}. Hence, $\ell_X\colon X\to LX$ must be a weak equivalence by \cite[Proposition 17.7.6]{Hi03}.
\end{proof}

In what follows, we specialize to combinatorial model categories, i.e., cofibrantly generated model
categories whose underlying category is locally presentable. Since
they have become a standard notion in homotopy theory we refer to
\cite{Du01} or \cite{Ba10} for expositions of the subject. In a left proper combinatorial model category left Bousfield localizations with respect to a set always exist, cf.\ \cite[Theorem~4.7]{Ba10}. The analogue for cellular model categories is proved in \cite[Theorem 4.1.1]{Hi03}.

%
%
%
%
%

The next two results correspond to \cite[Lemma 1.2]{CC06} and \cite[Lemma 1.3]{CC06}, but we drop the hypothesis that the model category be simplicial.

\begin{lemma}\label{tech-lem}
Let $\M$ be a combinatorial model category. Then there is
regular cardinal $\mu$ such that, for every class of objects
$\mathcal{D}$ in $\M$, the class of
$\mathcal{D}$\nobreakdash-equivalences $\mathcal{D}^\perp$ is closed under $\mu$\nobreakdash-filtered
colimits.
\end{lemma}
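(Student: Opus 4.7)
The strategy combines three ingredients: for a sufficiently large regular cardinal $\mu$, $\mu$-filtered colimits in a combinatorial model category compute homotopy colimits; the hammock localization sends homotopy colimits in its first variable to homotopy limits; and homotopy limits preserve levelwise weak equivalences. Concretely, since $\M$ is $\lambda$-combinatorial for some regular cardinal $\lambda$, I would choose a regular $\mu \geq \lambda$ large enough that $\mu$-filtered colimits of weak equivalences (and of (trivial) cofibrations) remain of the same type, the small-object cofibrant replacement $Q$ commutes with $\mu$-filtered colimits, and the canonical map $\hocolim_I Z \to \colim_I Z$ is a weak equivalence whenever $Z : I \to \M$ is a $\mu$-filtered diagram of cofibrant objects. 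These are standard cardinal estimates relying on the $\lambda$-presentability of the domains and codomains of the generating (trivial) cofibrations.

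Next, given a $\mu$-filtered diagram $\{f_i\colon A_i \to B_i\}$ of $\D$-equivalences, apply $Q$ levelwise to obtain a diagram $\{Qf_i\}$ of cofibrations between cofibrant objects. The colimit $\colim_i Qf_i$ is levelwise weakly equivalent to $\colim_i f_i$ by the choice of $\mu$, and since $\calL^H\M$ preserves weak equivalences in each variable (and by Theorem~\ref{thm}), it suffices to show that $\colim_i Qf_i$ lies in $\D^\perp$. Fix $X \in \D$. The weak equivalence $\hocolim_i QA_i \to \colim_i QA_i$ yields an equivalence $\calL^H\M(\colim_i QA_i, X) \simeq \holim_i \calL^H\M(QA_i, X)$, and similarly for the $QB_i$; hence $\calL^H\M(\colim_i Qf_i, X)$ is weakly equivalent to $\holim_i \calL^H\M(Qf_i, X)$, a homotopy limit of levelwise weak equivalences, which is itself a weak equivalence of simplicial sets.

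The main technical obstacle is to pin down the correct $\mu$ and to justify that $\calL^H\M(-,X)$ converts $\mu$-filtered homotopy colimits into homotopy limits of simplicial sets; this can be done by a direct Bousfield--Smith cardinality argument on the zig-zags that define the hammock localization, or more economically by invoking Dugger's theorem to reduce to a Quillen-equivalent simplicial combinatorial model category, where the statement specializes to \cite[Lemma~1.2]{CC06}.
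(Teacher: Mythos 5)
Your argument is correct and follows essentially the same route as the paper: choose $\mu$ so that weak equivalences and cofibrant replacement are preserved by $\mu$-filtered colimits, replace the diagram by a levelwise cofibrant one, identify the $\mu$-filtered colimit with the homotopy colimit, and use that homotopy function complexes turn homotopy colimits in the first variable into homotopy limits, which preserve levelwise weak equivalences. The only divergence is at the step you flag as the main obstacle: no zig-zag cardinality argument or Dugger-style reduction is needed, since that step is a general fact about homotopy function complexes valid in any model category, cited in the paper as \cite[Theorem~19.4.4]{Hi03} (after first replacing the objects of $\mathcal{D}$ by fibrant ones, which the paper does at the outset and your write-up omits).
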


\begin{proof}
Since $\map(-,-)$ is homotopy invariant, we can assume that each object in $\mathcal{D}$ is fibrant. Since we are assuming that $\M$ is combinatorial, there is a regular cardinal $\mu$ such that
weak equivalences are preserved by $\mu$\nobreakdash-filtered colimits and there are cofibrant and fibrant replacement functors that preserve $\mu$\nobreakdash-filtered colimits. Let
$f_i\colon X_i\to Y_i$ be $\mathcal{D}$\nobreakdash-equivalences for all $i\in
I$, where $I$ is a $\mu$\nobreakdash-filtered category. Since we are assuming that cofibrant replacement preserves $\mu$\nobreakdash-filtered colimits, we can assume that $X_i$ and $Y_i$ are cofibrant for all $i\in I$.

We have a commutative diagram
$$
\xymatrix@C=1.6cm{\colim_I X_i \ar[r]^-{\colim_I f_i} & \colim_I Y_i \\ \hocolim_I X_i \ar[r]^-{\hocolim_I f_i} \ar[u] & \hocolim_I Y_i \ar[u] }
$$
where the vertical arrows are weak equivalences since $\mu$\nobreakdash-filtered colimits are homotopy co\-limits, due to the fact that $\mu$\nobreakdash-filtered
colimits of weak equivalences are weak equivalences. To finish the
proof it is enough to prove that the bottom arrow is a
$\mathcal{D}$\nobreakdash-equivalence. But now, for every object
$Z\in\mathcal{D}$ we have a commutative square
$$
\xymatrix@C=1cm{\map(\hocolim_I X_i,Z) \ar[r] \ar[d] & \map(\hocolim_I Y_i,Z) \ar[d] \\ \holim_I \map(X_i,Z) \ar[r] & \holim_I \map(Y_i,Z) }
$$
where the vertical arrows are weak homotopy equivalences by \cite[Theorem
19.4.4]{Hi03}, and the bottom arrow is a weak homotopy equivalence since
every $f_i$ is a $\mathcal{D}$\nobreakdash-equivalence. This proves that
$\hocolim_I f_i$ is a $\mathcal{D}$\nobreakdash-equivalence.
\end{proof}

In the following statement we will need to assume \emph{Vop\v{e}nka's principle}. It is a set\nobreakdash-theore\-tical axiom guaranteeing that every full subcategory of a locally presentable category which is closed under limits is a locally presentable reflective subcategory, i.e., the inclusion has a left adjoint, cf.\ \cite[Theorem~6.6]{AR94}.

\begin{lemma}\label{lem-ho-idemp-is-L_f}
Assume that Vop\v{e}nka's principle holds. Let $\M$ be a
combinatorial model category and let $\mathcal{D}$ be any class of objects in $\M$. Then there is a set of morphisms
$S$ such that the class of $S^{\perp}$\nobreakdash-equivalences, $(S^{\perp})^{\perp}$, is equal to the class
of $\mathcal{D}$\nobreakdash-equivalences, $\mathcal{D}^\perp$. Hence, $(\mathcal{D}^{\perp})^{\perp}$ equals the class of $S$-locals.
\end{lemma}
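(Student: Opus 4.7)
The plan is to realise the class of $\mathcal{D}$-equivalences as the closure under $\mu$-filtered colimits of a set inside the arrow category $\M^{\to}$, and then deduce the equality of classes from the Galois connection between the two orthogonality operations together with a second invocation of Lemma \ref{tech-lem}.

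First, by Lemma \ref{tech-lem} there exists a regular cardinal $\mu$ (depending only on $\M$) such that, for \emph{every} class of objects in $\M$, the associated class of equivalences is closed under $\mu$-filtered colimits in $\M^{\to}$. Since $\M$ is combinatorial, $\M^{\to}$ is locally presentable, so the full subcategory $\mathcal{A}\subseteq\M^{\to}$ whose objects are the morphisms in $\mathcal{D}^{\perp}$ is a full subcategory of a locally presentable category closed under $\mu$-filtered colimits. The input from Vop\v{e}nka's principle is then \cite[Theorem~6.17]{AR94}: every such subcategory is accessible. Hence there exists a regular cardinal $\lambda\geq\mu$ together with a set $S$ of representatives of the isomorphism classes of $\lambda$-presentable objects of $\mathcal{A}$ with the property that every object of $\mathcal{A}$ is a $\lambda$-filtered colimit of objects of $S$. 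I take this $S$ as the candidate set.

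The equality $(S^{\perp})^{\perp}=\mathcal{D}^{\perp}$ then falls out as follows. The inclusion $(S^{\perp})^{\perp}\subseteq\mathcal{D}^{\perp}$ is formal from $S\subseteq\mathcal{D}^{\perp}$, using the standard Galois identity $((\mathcal{D}^{\perp})^{\perp})^{\perp}=\mathcal{D}^{\perp}$. For the reverse inclusion, given $f\in\mathcal{D}^{\perp}$ I would write $f$ as a $\mu$-filtered colimit in $\M^{\to}$ of morphisms $f_i\in S\subseteq(S^{\perp})^{\perp}$ (the latter containment being automatic) and apply Lemma \ref{tech-lem} to the class $S^{\perp}$ to deduce that $(S^{\perp})^{\perp}$ is closed under $\mu$-filtered colimits, so $f\in(S^{\perp})^{\perp}$. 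The last assertion of the statement, that $(\mathcal{D}^{\perp})^{\perp}$ coincides with the class of $S$-local objects, is then immediate from $((S^{\perp})^{\perp})^{\perp}=S^{\perp}$.

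The main obstacle I anticipate is the careful bookkeeping of cardinals and of the exact form of Vop\v{e}nka's principle employed: one must exploit that the single $\mu$ produced by Lemma \ref{tech-lem} works uniformly in the class of objects, so that the same $\mu$ governs both $\mathcal{D}^{\perp}$ and $S^{\perp}$, and one must apply the accessibility theorem to a subcategory of $\M^{\to}$ rather than of $\M$ itself. The rest is a purely formal consequence of the two Galois identities.
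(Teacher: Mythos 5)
Your proof is correct and is essentially the paper's own argument: a uniform cardinal $\mu$ from Lemma \ref{tech-lem}, Vop\v{e}nka's principle applied to the full subcategory of the arrow category spanned by the $\mathcal{D}$\nobreakdash-equivalences to extract a set $S$ of presentable $\mathcal{D}$\nobreakdash-equivalences of which every $\mathcal{D}$\nobreakdash-equivalence is a suitably filtered colimit, the formal (Galois) inclusion $(S^{\perp})^{\perp}\subseteq\mathcal{D}^{\perp}$, and closure of $(S^{\perp})^{\perp}$ under $\mu$\nobreakdash-filtered colimits for the converse. The only, harmless, divergence is the form of Vop\v{e}nka's principle invoked: the paper cites \cite[Theorem~6.24]{AR94}, which bounds an arbitrary full subcategory of a locally presentable category by a set of presentable objects without any closure hypothesis (adjusting cardinals afterwards via \cite[Corollary~2.14]{AR94}), whereas you feed the closure under $\mu$\nobreakdash-filtered colimits supplied by Lemma \ref{tech-lem} into the accessibility theorem for filtered-colimit-closed subcategories; both routes produce the same set $S$ and the concluding argument is identical.
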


\begin{proof}
By Lemma \ref{tech-lem}, there is a
regular cardinal $\mu'$ such that, for every class of objects
$\mathcal{E}$ in $\M$, the class of $\mathcal{E}$\nobreakdash-equivalences $\mathcal{E}^\perp$ is closed under $\mu'$\nobreakdash-filtered colimits. On the other hand, $\M$ is $\lambda$\nobreakdash-presentable for some regular cardinal $\lambda$ and so is the category of arrows of $\M$ \cite[Corollary~1.54]{AR94}. Since we are under Vop\v{e}nka's principle, by \cite[Theorem~6.24]{AR94} there exists a regular cardinal $\lambda'$ and a set of $\lambda'$\nobreakdash-presentable $\mathcal{D}$\nobreakdash-equivalences $S'$ such that every morphism in $\mathcal{D}^\perp$ is a $\lambda'$\nobreakdash-filtered colimit of morphisms in $S'$. It then follows that there exists a cardinal $\mu\geq\max\{\lambda', \mu'\}$ and a set of $\mathcal{D}$\nobreakdash-equivalences $S$ such that every morphism in $\mathcal{D}^\perp$ is a $\mu$\nobreakdash-filtered colimit of morphisms in $S$ and $\mathcal{D}^\perp$ is closed under $\mu$\nobreakdash-filtered colimits \cite[Corollary~2.14]{AR94}.

Since every object in $\mathcal{D}$ is $S$\nobreakdash-local, every
$S$\nobreakdash-equivalence is in $\mathcal{D}^{\perp}$. Conversely, every
$g$ in $\mathcal{D}^{\perp}$ is a $\mu$\nobreakdash-filtered colimit of
morphisms in $S$. But now $S\subset (S^\perp)^\perp$ and $(S^\perp)^\perp$ is closed under $\mu'$\nobreakdash-filtered colimits by the first comment in the proof. In particular, $(S^\perp)^\perp$ is also closed under $\mu$\nobreakdash-filtered colimits. This implies that $g$ is in $(S^\perp)^\perp$.
\end{proof}

As a direct consequence of Lemma \ref{lem-ho-idemp-is-L_f}, we obtain an alternative proof of \cite[Theorem~2.1]{CC06} that avoids the assumption of the model category being simplicial. A different proof was given in \cite[Theorem~2.3]{RT03}.

\begin{corollary}\label{Bousf-combinatorial}
Assume that Vop\v{e}nka's principle holds. Let $\M$ be a left proper
combinatorial model category. Then the left Bousfield localization with respect to any class of morphisms $\calS$ in $\M$ exists.
\end{corollary}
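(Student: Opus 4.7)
The plan is to reduce the problem to the case of a \emph{set} of morphisms, which is covered by the standard existence theorem for left Bousfield localizations in left proper combinatorial model categories \cite[Theorem~4.7]{Ba10}. Fix an arbitrary class $\calS$ of morphisms in $\M$ and set $\mathcal{D}=\calS^\perp$, the class of $\calS$\nobreakdash-local objects. By the definition of left Bousfield localization recalled earlier in the paper, the weak equivalences of the would-be localization $L_\calS\M$ are exactly the elements of $\mathcal{D}^\perp=(\calS^\perp)^\perp$.

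Now I apply Lemma \ref{lem-ho-idemp-is-L_f} to the class $\mathcal{D}=\calS^\perp$. Using Vop{\v e}nka's principle, this produces a \emph{set} of morphisms $S$ such that
$$
(S^\perp)^\perp=\mathcal{D}^\perp=(\calS^\perp)^\perp.
$$
Because $\M$ is left proper combinatorial and $S$ is a set, \cite[Theorem~4.7]{Ba10} guarantees that the left Bousfield localization $L_S\M$ exists. By construction, $L_S\M$ has the same underlying category as $\M$, the same cofibrations, and its weak equivalences form precisely the class $(S^\perp)^\perp$.

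Finally, I compare $L_S\M$ with the putative $L_\calS\M$. Both must live on the same underlying category with the same cofibrations as $\M$, and the displayed equality shows that their classes of weak equivalences coincide. Hence $L_S\M$ already satisfies the defining properties of $L_\calS\M$, so the left Bousfield localization of $\M$ with respect to $\calS$ exists (and equals $L_S\M$). The only substantive step is the production of the set $S$, which is the main obstacle and is encapsulated entirely inside Lemma \ref{lem-ho-idemp-is-L_f}; once that lemma is granted, the remainder of the argument is a matching of orthogonality classes.
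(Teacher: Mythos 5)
Your proof is correct and follows essentially the same route as the paper: apply Lemma \ref{lem-ho-idemp-is-L_f} to the class $\calS^\perp$ of $\calS$-local objects to obtain a set $S$ with $(S^\perp)^\perp=(\calS^\perp)^\perp$, invoke \cite[Theorem~4.7]{Ba10} to build $L_S\M$, and observe that its weak equivalences match those required of $L_\calS\M$. Nothing is missing.
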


\begin{proof}
By Lemma \ref{lem-ho-idemp-is-L_f}, the class $\calS^\perp$ coincides with the class $D^\perp$ with respect to a set of morphisms $D$. By \cite[Theorem~4.7]{Ba10}, the Bousfield localization $L_D\M$ with respect to $D$ exists in $\M$. Since the $D^\perp$-equivalences coincide with the $\calS^\perp$-equivalences, $L_D\M$ is also the left Bousfield localization with respect to $\calS$.
\end{proof}

As noticed in \cite{CC06}, in general we cannot take $S$ in the conclusion of Lemma
\ref{lem-ho-idemp-is-L_f} to consist of a single morphism. However, it is possible to reduce $S$ to a single morphism if we assume, for instance, that we work in a pointed category. In particular, the next result applies to stable combinatorial model categories.

\begin{corollary}\label{Cor-L_f}
Assume that Vop\v{e}nka's principle holds. Let $\M$ be a pointed combinatorial model category and let $\mathcal{D}$ be any class of objects in $\M$. Then there is a morphism $f$ such that the class of $f$\nobreakdash-equivalences is equal to the class of $\mathcal{D}$\nobreakdash-equivalences.
\end{corollary}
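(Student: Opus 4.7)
The plan is to apply Lemma \ref{lem-ho-idemp-is-L_f} to extract from $\mathcal{D}$ a set of morphisms $S=\{s_i\colon A_i\to B_i\}_{i\in I}$ with $\mathcal{D}^\perp=(S^\perp)^\perp$, and then to condense this whole set into a single morphism using the pointed structure of~$\M$.

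First, since the hammock localization is a homotopy function complex and is therefore homotopy invariant, I may replace each $s_i$ by a weakly equivalent morphism between cofibrant objects; this does not alter the class $S^\perp$. Using the fact that $\M$ is pointed (and cocomplete, being combinatorial), I then form
$$
f\colon \bigvee_{i\in I} A_i \longrightarrow \bigvee_{i\in I} B_i,
$$
the wedge (coproduct in the pointed category) of the morphisms $s_i$. Its source and target are cofibrant as coproducts of cofibrant objects.

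The heart of the argument is to show that $\{f\}^\perp=S^\perp$. For any fibrant object $X$, the canonical comparison map
$$
\map\Bigl(\bigvee_{i\in I}A_i,X\Bigr)\longrightarrow\prod_{i\in I}\map(A_i,X)
$$
is a weak equivalence, because the source, being a coproduct of cofibrant objects, already computes the homotopy coproduct, and homotopy function complexes send homotopy colimits to homotopy limits by \cite[Theorem~19.4.4]{Hi03}. The same holds with $B_i$ in place of $A_i$, and under these equivalences $f^*$ is identified with $\prod_i s_i^*$. Since a product of maps of simplicial sets is a weak equivalence exactly when every factor is, $X$ is $f$-local if and only if $X$ is $s_i$-local for every $i\in I$. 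Hence $\{f\}^\perp=S^\perp$, and taking simplicial orthogonal complements once more yields $(\{f\}^\perp)^\perp=(S^\perp)^\perp=\mathcal{D}^\perp$, so the class of $f$-equivalences coincides with the class of $\mathcal{D}$-equivalences.

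The main technical point to verify is the compatibility of the homotopy function complex with coproducts of cofibrant objects. This is where pointedness is used: it ensures that the wedge is the natural coproduct in $\M$ and that the decomposition of the mapping space reduces to a plain product indexed by $I$, with no further basepoint bookkeeping. By \cite[Proposition~17.8.2]{Hi03} this property is independent of the particular model of the homotopy function complex, so it suffices to check it using any one convenient model, such as $\calL^H\M(-,-)$ itself.
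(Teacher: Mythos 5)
Your overall route is the same as the paper's: obtain $S$ from Lemma \ref{lem-ho-idemp-is-L_f}, set $f=\coprod_{s\in S}s$, identify $\map(f,X)\simeq\prod_{s\in S}\map(s,X)$, and conclude that the $f$-locals coincide with the $S$-locals, whence $(\{f\}^\perp)^\perp=(S^\perp)^\perp=\mathcal{D}^\perp$. The forward direction ($S$-local $\Rightarrow$ $f$-local) and the identification of the mapping space of a coproduct of cofibrant objects with the product of mapping spaces are fine. The gap is in the converse. Your justification, ``a product of maps of simplicial sets is a weak equivalence exactly when every factor is,'' is false as a general statement: if one factor is empty (say $\emptyset\to\emptyset$), the product map is a weak equivalence no matter what the other factors do, and homotopy function complexes in a model category can perfectly well be empty. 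Moreover, you locate the use of pointedness in the wrong place: you invoke it only to say that the wedge is the coproduct and to avoid ``basepoint bookkeeping,'' but the construction of $f$ and the equivalence $\map(\coprod_i A_i,X)\simeq\prod_i\map(A_i,X)$ are available in any cocomplete (unpointed) model category. As written, your argument would therefore prove the unpointed statement as well, which is known to fail --- this is exactly the remark preceding the corollary, citing \cite{CC06}, that in general $S$ cannot be reduced to a single morphism.

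Pointedness must enter precisely at this converse step, and the paper uses it as follows: for each $s\colon A\to B$ in $S$, the zero maps define a morphism from $\coprod_{s'\in S}s'$ to $s$ splitting the inclusion of the $s$-summand, so that the composite $\map(s,X)\to\map(\coprod s',X)\simeq\prod_{s'}\map(s',X)\to\map(s,X)$ is the identity; hence $\map(s,X)$ is a retract of $\map(f,X)$ and is a weak homotopy equivalence whenever $\map(f,X)$ is. Alternatively, you could repair your product argument by noting that in a pointed category every $\map(A,X)$ contains the zero map and is thus nonempty, and for maps between nonempty simplicial sets factorwise detection of weak equivalences does hold. Either fix works, but the pointed structure has to be invoked at this step, not where you placed it.
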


\begin{proof}
Let $S$ be the set of morphisms and $\mu$ the regular cardinal as in the proof of Lemma
\ref{lem-ho-idemp-is-L_f} and let $f=\coprod s$ for all $s\colon A\to B$ in
$S$. It is enough to prove that $S^{h\perp}=f^{h\perp}$. If $X$ is an $S$-local object, then every component in the following product
$$
\prod\map(s,X)\simeq \map\left(\coprod s, X\right) =\map(f,X)
$$
is a weak homotopy equivalence. Hence, $X$ is $f$-local.

Conversely, if $X$ is $f$-local, then $\prod\map(s,X)$ is a weak homotopy equivalence. Since $\M$ is pointed, for each $s$ in $S$ there is a retraction map $r$ such that the composition
$$
\xymatrix{\map(s,X) \ar[r]^-r & \map(\coprod s,X)\simeq\prod\map( s, X) \ar[r] & \map(s,X).}
$$
is the identity. Hence, $\map(s,X)$ is a weak homotopy equivalence for each $s$ in $S$. Thus, $X$ is $S$-local.
\end{proof}

The following result is a direct consequence of Proposition \ref{prop} and Lemma \ref{lem-ho-idemp-is-L_f}.

\begin{theorem}\label{thm-ho-idemp-is-L_f}
Assume that Vop\v{e}nka's principle holds. Let $\M$ be a
combinatorial model category.
If $(L,\ell)$ is any homotopy idempotent functor on $\M$, then
there is a set of morphisms $S$ such that the class of $S$\nobreakdash-local objects coincides with the class of $L$\nobreakdash-local objects. Furthermore, if $\M$ is pointed, then we can take $S$ to consist of a single morphism.
\end{theorem}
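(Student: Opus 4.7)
The plan is to deduce this directly from Proposition \ref{prop} and Lemma \ref{lem-ho-idemp-is-L_f}, plus (for the pointed refinement) the coproduct trick from Corollary \ref{Cor-L_f}. The only real work is to follow the definitions carefully and verify that the double simplicial orthogonal complement of the class of $L$-local objects recovers exactly the $L$-local objects themselves; everything else is bookkeeping.

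First, I would set $\mathcal{D}$ to be the class of $L$-local objects in $\M$. By Proposition \ref{prop}, the simplicial orthogonal complement $\mathcal{D}^\perp$ is precisely the class of $L$-equivalences. Applying Lemma \ref{lem-ho-idemp-is-L_f} to this $\mathcal{D}$, I obtain a set of morphisms $S$ with the property that $(S^\perp)^\perp = \mathcal{D}^\perp$, and, as recorded at the end of that lemma, the class of $S$-local objects coincides with $(\mathcal{D}^\perp)^\perp$.

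Next I would show that $(\mathcal{D}^\perp)^\perp = \mathcal{D}$, i.e., that the fibrant objects simplicially orthogonal to every $L$-equivalence are exactly the $L$-local objects. One inclusion is immediate from the definition of $L$-equivalences and the homotopy invariance of $\map(-,-)$: any $L$-local object is orthogonal to every $L$-equivalence. For the other inclusion, I would reuse the argument appearing in the last paragraph of the proof of Proposition \ref{prop}: given a fibrant $X$ with $\map(f,X)$ a weak homotopy equivalence for every $L$-equivalence $f$, applying this to the $L$-equivalence $\ell_X\colon X\to LX$ together with Lemma \ref{lemma-eta} and \cite[Proposition 17.7.6]{Hi03} forces $\ell_X$ to be a weak equivalence, so $X\simeq LX$ is $L$-local. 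Combining with the previous step yields that the class of $S$-locals coincides with the class of $L$-locals.

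For the pointed refinement, I would simply rerun the argument of Corollary \ref{Cor-L_f}: take $f=\coprod_{s\in S} s$, and use that $\map(f,X)\simeq\prod_{s\in S}\map(s,X)$ together with the pointed retraction argument to conclude $\{f\}^\perp = S^\perp$. Since the fibrant objects orthogonal to $\{f\}$ equal the fibrant objects orthogonal to $S$, we recover the $L$-local objects using a single morphism $f$. The main potential pitfall is not a deep one but a notational one: keeping the two uses of the symbol $\perp$ (for classes of morphisms and for classes of objects) straight, and remembering that $\map(-,-)=\calL^H\C(-,-)$ is homotopy invariant so all orthogonality statements may freely be read on fibrant-cofibrant replacements.
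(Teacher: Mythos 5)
Your proposal is correct and follows essentially the same route as the paper: take $\mathcal{D}$ to be the $L$-local objects, identify $\mathcal{D}^\perp$ with the $L$-equivalences via Proposition \ref{prop}, apply Lemma \ref{lem-ho-idemp-is-L_f}, and use Corollary \ref{Cor-L_f} for the pointed case. The only difference is that you spell out explicitly the step $(\mathcal{D}^\perp)^\perp=\mathcal{D}$ (which the paper leaves implicit, its content being the first and last paragraphs of the proof of Proposition \ref{prop}), and your handling of it is accurate.
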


\begin{proof}
Let $\mathcal{D}$ be the class of $L$\nobreakdash-local objects. It follows
from Proposition \ref{prop} that the class of
$\mathcal{D}$\nobreakdash-equivalences coincides with the class of
$L$\nobreakdash-equivalences. Then Lemma \ref{lem-ho-idemp-is-L_f} and Corollary \ref{Cor-L_f}
finish the proof.
\end{proof}


We next extend Theorem \ref{thm-ho-idemp-is-L_f} to any cofibrantly generated model category, in particular to any cellular model category \cite[Definition 12.1.1]{Hi03}.

We remind the reader that a Quillen pair $F\colon\mathcal{N}\rightleftarrows\M:G$ is \emph{homotopically surjective} if, for every fibrant object $X$ in $\M$ and every cofibrant replacement $(G X)^c$ of $G X$, the induced morphism $F (G X)^c \to X$ is a weak equivalence \cite[Definition 3.1]{Du01}.

\begin{proposition}\label{prop-cofib-gen}
Assume that Vop\v{e}nka's principle holds. Let $F\colon\mathcal{N}\rightleftarrows\M:G$ be a homotopically surjective Quillen pair and let $\mathcal{N}$ be combinatorial. If $(L,\ell)$ is a homotopy idempotent functor on $\M$, then
there is a set of morphisms $S$ in $\M$ such that the class of $S$\nobreakdash-local objects coincides with the class of $L$\nobreakdash-local objects. Furthermore, if $\M$ is pointed, then we can take $S$ to consist of a single morphism.
\end{proposition}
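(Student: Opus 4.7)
The strategy is to transport the problem to $\mathcal{N}$ along the adjunction $F \dashv G$, apply the combinatorial case (Lemma \ref{lem-ho-idemp-is-L_f}) in $\mathcal{N}$, and pull the resulting set of morphisms back to $\M$ via $F$. Homotopical surjectivity will guarantee that $L$-local objects of $\M$ are faithfully detected by their $G$-images in $\mathcal{N}$, which is what makes the transfer reversible. Concretely, let $\mathcal{E} \subseteq \mathcal{N}$ be the class of objects of the form $GX$ with $X$ an $L$-local object of $\M$. Since $\mathcal{N}$ is combinatorial, Lemma \ref{lem-ho-idemp-is-L_f} produces a set of morphisms $S'$ in $\mathcal{N}$, which we may assume (after functorial cofibrant replacement) consists of maps between cofibrant objects, such that the $S'$-local objects of $\mathcal{N}$ coincide with the $\mathcal{E}$-local objects. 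Define $S = \{Fs' : s' \in S'\}$ in $\M$.

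The core task is to verify that, for $X$ fibrant in $\M$, $X$ is $S$-local if and only if $X$ is $L$-local. The derived adjunction identity $\map_{\M}(Fs', X) \simeq \map_{\mathcal{N}}(s', GX)$, valid for $s'$ between cofibrant objects and $X$ fibrant, reduces this to the claim that $GX$ is $\mathcal{E}$-local if and only if $X$ is $L$-local. One direction is immediate since $\mathcal{E}$ is tautologically contained in its class of local objects. For the converse, suppose $GX$ is $\mathcal{E}$-local and let $g \colon A \to B$ be any $L$-equivalence in $\M$; homotopical surjectivity yields cofibrant objects $A', B' \in \mathcal{N}$ with weak equivalences $FA' \to A$, $FB' \to B$ and a morphism $g' \colon A' \to B'$ such that $Fg' \simeq g$. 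Since $Fg'$ is then an $L$-equivalence, $g'$ is an $\mathcal{E}$-equivalence by Proposition \ref{prop} combined with adjunction, so $\map_{\mathcal{N}}(g', GX)$ is a weak homotopy equivalence, and adjunction carries this to $\map_{\M}(g, X)$; Proposition \ref{prop} then forces $X$ to be $L$-local.

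For the pointed case, the coproduct and retraction argument used in the proof of Corollary \ref{Cor-L_f} only uses that the ambient category is pointed, so $S$ can be replaced by the single morphism $\coprod_{s \in S} s$ to obtain the same class of local objects. The principal technical obstacle is the explicit construction of the lift $g'$ with $Fg' \simeq g$ for an arbitrary $L$-equivalence $g$, together with the justification that the hammock localization is compatible with the derived adjunction in the form $\map_{\M}(Fs', X) \simeq \map_{\mathcal{N}}(s', GX)$. Both follow from standard properties of Quillen adjunctions and homotopy function complexes together with the definition of homotopical surjectivity, but must be set up with care, since $\M$ itself is only cofibrantly generated and every argument must be mediated through the combinatorial category $\mathcal{N}$.
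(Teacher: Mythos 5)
Your proposal is correct and follows essentially the same route as the paper: form the class $\{GX : X \ L\text{-local}\}$ in $\mathcal{N}$, apply Lemma \ref{lem-ho-idemp-is-L_f} there to get a set $S'$, transport it to $\M$ as $S=\{Fs'^{\,c}\}$, and verify $S$-local $=$ $L$-local via the Quillen compatibility of homotopy function complexes together with Proposition \ref{prop}. The only (harmless) differences are organizational: you use homotopical surjectivity solely to lift an arbitrary $L$-equivalence $g$ to a map $g'$ in $\mathcal{N}$ with $Fg'\simeq g$ (making explicit a step the paper leaves terse), whereas the paper also invokes it in the easy direction via $F(GX)^c\xrightarrow{\sim}X$, and you spell out the pointed-case coproduct reduction that the paper leaves implicit.
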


\begin{proof}
Let $\mathcal{D}$ be the class of objects of the form $G X$ with $X$ $L$-local. Notice that they are fibrant because $G$ preserves fibrant objects. By Lemma \ref{lem-ho-idemp-is-L_f}, there is a set of morphisms $S'$ in $\mathcal{N}$ such that the class of $S'$-locals coincide with $(\mathcal{D}^{\perp})^{\perp}$. Let $S=\{F f^c\mid f\in S'\}$. We claim that the $L$-locals coincide with the $S$-locals.

Let $X$ be $L$-local (thus fibrant). By hypothesis, the morphism $F (G X)^c\to X$ is a weak equivalence. By definition, $G X$ is $S'$-local. Hence
$$
\map(f,G X)\simeq \map(f,G(F(G X)^c)^f)\simeq \map(F(f^c), F(G X)^c)
$$
are weak homotopy equivalences for any $f$ in $S'$. In particular, $X\simeq F(G X)^c$ is $S$-local.

Now let $X$ be $S$-local. By definition, $\map(F f^c, X)\simeq \map(f,G X)$ are weak homotopy equivalences for every $f$ in $S'$. Hence $G X$ is $S'$-local, i.e., $GX$ is in $(\mathcal{D}^{\perp})^{\perp}$.

By Proposition \ref{prop}, to prove that $GX$ is $L$-local it is enough to prove that $\map(g,GX)\simeq\map(Fg^c,X)$ are weak equivalences for all $L$-equivalences $g$. Since we have already proved that $GX$ is $D^{\perp}$-local, the proof will be finished if we can show that $g$ is a $\D$-equivalence if and only if $Fg^c$ is an $L$-equivalence. But, by Proposition \ref{prop} again, both conditions are equivalent to the fact that $\map(g,GY)\simeq\map(Fg^c,Y)$ is a weak equivalence for all $L$-local objects $Y$.
\end{proof}

The following result generalizes \cite[Theorem~2.3]{CC06} to cofibrantly generated model categories that are not necessarily locally presentable nor simplicial. It also gives a positive answer to a question by Farjoun in \cite{Far96} for a broad family of model categories.

\begin{corollary}\label{cor-cofib-gen1}
Assume that Vop\v{e}nka's principle holds. Let $\M$ be a
cofibrantly generated model category. If $(L,\ell)$ is a homotopy idempotent functor on $\M$, then there is a set of morphisms $S$ such that the class of $S$\nobreakdash-local objects coincides with the class of $L$\nobreakdash-local objects. Furthermore, if $\M$ is pointed, then we can take $S$ to consist of a single morphism.
\end{corollary}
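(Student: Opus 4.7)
The plan is to reduce the statement to Proposition~\ref{prop-cofib-gen}, which already delivers the desired set $S$ (and a single morphism in the pointed case) as soon as $\M$ sits as the right-hand side of a homotopically surjective Quillen pair whose left-hand side is combinatorial. Thus it suffices to exhibit such a Quillen pair for every cofibrantly generated model category $\M$.

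To produce it, I would use Dugger's universal homotopy theory construction from \cite{Du01}. Let $I$ and $J$ be sets of generating cofibrations and generating trivial cofibrations for $\M$, and let $\mathcal{C}$ be a small full subcategory of $\M$ containing all domains and codomains of morphisms in $I\cup J$. Take $\mathcal{N}=U\mathcal{C}$, the simplicial presheaf category on $\mathcal{C}$ with its projective model structure; this is a combinatorial (in fact simplicial) model category. The universal property of $U\mathcal{C}$ turns the inclusion $\mathcal{C}\hookrightarrow\M$ into a Quillen adjunction $F\colon U\mathcal{C}\rightleftarrows\M:G$, where $F$ sends each representable to the corresponding object of $\M$.

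The key step is to verify that this Quillen pair is homotopically surjective. By Dugger's criterion, this reduces to showing that every object of $\M$ is weakly equivalent to a homotopy colimit of objects of $\mathcal{C}$. Cofibrant generation provides this: a cofibrant replacement of any object of $\M$ is a retract of a transfinite composition of pushouts of maps in $I$, whose domains and codomains lie in $\mathcal{C}$. Such a cell complex is a homotopy colimit of objects of $\mathcal{C}$, and a retract of a homotopy colimit can itself be realized as a homotopy colimit by splitting the idempotent via a mapping telescope. With homotopical surjectivity in hand, Proposition~\ref{prop-cofib-gen} finishes both the general and the pointed statement.

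The principal obstacle is precisely the verification of homotopical surjectivity: since $\M$ is not assumed simplicial, the statement that cell complexes built from $\mathcal{C}$ realize every object up to weak equivalence needs Dugger's recognition principle to be invoked intrinsically, and retracts of cell complexes must be treated separately via the mapping-telescope trick. Everything else is a formal consequence of the universal property of $U\mathcal{C}$ together with the results already established earlier in this section.
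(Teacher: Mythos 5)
Your overall reduction is the same as the paper's: both proofs obtain the conclusion by feeding a homotopically surjective Quillen pair $\mathcal{N}\rightleftarrows\M$ with $\mathcal{N}$ combinatorial into Proposition \ref{prop-cofib-gen}. The difference is how that pair is produced, and this is where your argument has a genuine gap. The paper simply invokes \cite[Theorem~1.1]{Ra09}, which under Vop\v{e}nka's principle gives a Quillen \emph{equivalence} between $\M$ and a combinatorial model category; Raptis's proof is itself nontrivial and uses Vop\v{e}nka's principle essentially (via localization of a universal model category at a proper class of maps, in the spirit of \cite{RT03}). You instead claim that for $\mathcal{C}$ the domains and codomains of the generating maps, the Dugger pair $F\colon U\mathcal{C}\rightleftarrows\M:G$ is already homotopically surjective, with no large-cardinal input. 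If that were correct it would give a Raptis-type comparison for every cofibrantly generated model category in ZFC, which is not known, and the verification you sketch does not work.

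Concretely, homotopical surjectivity is the statement that the \emph{derived counit} $F(GX)^c\to X$ is a weak equivalence, i.e.\ that the canonical $\mathcal{C}$-indexed homotopy colimit built from $GX$ resolves $X$. Knowing that $X$ is weakly equivalent to \emph{some} homotopy colimit of objects of $\mathcal{C}$ only shows that $X$ lies in the essential image of the derived functor of $F$ (lift the diagram through the representables and use that $F$ preserves homotopy colimits); by the triangle identity this makes the derived counit a split epimorphism in $\Ho(\M)$, not an equivalence. Nor can you run an induction over the cell filtration: the class of objects at which the derived counit is invertible is not closed under pushouts along generating cofibrations, because the right adjoint $G$ (a mapping-space-out-of-$\mathcal{C}$ construction) does not carry homotopy pushouts in $\M$ to homotopy pushouts of presheaves, so the successor step fails; the transfinite-composition and retract steps raise further coherence problems (rewriting an iterated colimit as a single $\mathcal{C}$-valued diagram, splitting a homotopy idempotent inside such presentations). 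Dugger's own verification of homotopical surjectivity in \cite{Du01} is not a cell-complex induction: it uses local presentability in an essential way, taking $\mathcal{C}$ to contain enough $\lambda$-presentable objects so that every object is the ($\lambda$-filtered, hence homotopy) colimit of its canonical diagram, and mapping out of $\mathcal{C}$ commutes with such colimits. For a general cofibrantly generated $\M$ (e.g.\ a topological one), the domains of the generating cofibrations are only small relative to the cellular maps, the canonical diagram is not filtered and its colimit is not $X$, so this route is unavailable; this is precisely the gap that Raptis's theorem, and hence Vop\v{e}nka's principle, is needed to close.
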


\begin{proof}
Since we are assuming Vop\v{e}nka's principle, \cite[Theorem~1.1]{Ra09} implies that there is a Quillen
equivalence (in particular homotopically surjective) $\mathcal{N}\rightleftarrows\M$ where
$\mathcal{N}$ is combinatorial. Hence, the result follows from Proposition \ref{prop-cofib-gen}.
\end{proof}

%

The cofibrantly generated condition in Corollary \ref{cor-cofib-gen1} is necessary. In \cite{Ch05} an example is given of a left Bousfield localization with respect to a class of morphisms in a (non cofibrantly generated) model category that cannot be a left Bousfield localization with respect to any set.




We next state the analogues of the main results in this section but for \emph{augmented} homotopy idempotent functors. We omit the proofs since they are easily reproduced following the proofs for the coaugmented case. An \emph{augmented homotopy idempotent} functor in a model category $\M$ is a functor $C\colon\M\to\M$ together with a natural transformation $\varepsilon\colon C\to 1$ such that $C$ sends weak equivalences to weak equivalences and the natural
morphisms $\varepsilon_{CX},C\varepsilon_X\colon CCX\to CX$ are equal in the homotopy category $\Ho(\M)$ and both are weak equivalences for every object $X$ in~$\M$. The following result generalizes \cite[Theorem~1.4]{Ch07} to combinatorial model categories not necessarily simplicial.

\begin{corollary}\label{cell2}
Assume that Vop\v{e}nka's principle holds. Let $\M$ be a right proper
combinatorial model category. Then the right Bousfield localization with respect to any class of objects in $\M$ exists. \qed
\end{corollary}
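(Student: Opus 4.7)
The plan is to reproduce the strategy of Corollary~\ref{Bousf-combinatorial} with ``objects'' playing the role of ``morphisms''. Namely, I would first show that, under Vop\v{e}nka's principle, the class of $\mathcal{K}$\nobreakdash-colocal equivalences (those morphisms $f$ with $\map(K,f)$ a weak equivalence for every $K\in\mathcal{K}$) coincides with the class of $K'$\nobreakdash-colocal equivalences for some set $K'\subseteq\mathcal{K}$. Then the known existence of right Bousfield localization with respect to a set of objects in a right proper combinatorial model category (the right-handed analog of \cite[Theorem~4.7]{Ba10}) would produce a model category $R_{K'}\M$ whose weak equivalences are the $K'$\nobreakdash-colocal equivalences, which by the previous step agree with the $\mathcal{K}$\nobreakdash-colocal equivalences, yielding the desired $R_{\mathcal{K}}\M$.

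The reduction from the class $\mathcal{K}$ to a set $K'$ is the analog of Lemma~\ref{lem-ho-idemp-is-L_f}. One inclusion is trivial since $K'\subseteq\mathcal{K}$. For the other, I would first choose a regular cardinal $\mu$ large enough that $\M$ admits a cofibrant replacement functor preserving $\mu$\nobreakdash-filtered colimits and that $\mu$\nobreakdash-filtered colimits of cofibrant objects are homotopy colimits; this is possible because $\M$ is combinatorial. By \cite[Theorem~6.24]{AR94}, Vop\v{e}nka's principle then provides a regular cardinal $\mu'\geq\mu$ and a set $K'\subseteq\mathcal{K}$ such that every object $K\in\mathcal{K}$ can be written as a $\mu'$\nobreakdash-filtered colimit of objects in $K'$. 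Replacing everything by cofibrant models and using that $\map(-,-)$ is homotopy invariant, one obtains
$$
\map(K,f)\;\simeq\;\map(\hocolim_i K_i,f)\;\simeq\;\holim_i\map(K_i,f)
$$
by \cite[Theorem~19.4.4]{Hi03}. If each $\map(K_i,f)$ is a weak equivalence, so is this homotopy limit, whence $\map(K,f)$ is a weak equivalence.

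The principal obstacle is the careful interplay between the set-theoretic reduction afforded by Vop\v{e}nka's principle and the homotopical requirement that the filtered colimit defining $K$ be a homotopy colimit; exactly as in the proof of Lemma~\ref{lem-ho-idemp-is-L_f}, this is handled by a sufficiently large choice of the regular cardinal $\mu$ using the combinatorial structure of $\M$.
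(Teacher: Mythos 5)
Your argument is correct and is essentially the proof the paper intends: Corollary \ref{cell2} is stated with its proof omitted as the dualization of Lemma \ref{tech-lem}, Lemma \ref{lem-ho-idemp-is-L_f} and Corollary \ref{Bousf-combinatorial}, and your reduction of the class of objects to a set via Vop\v{e}nka's principle, combined with $\map(\hocolim_i K_i,f)\simeq\holim_i\map(K_i,f)$ and the existence of right Bousfield localizations at sets of objects in right proper combinatorial model categories, is exactly that dualization (applied directly to the class $\mathcal{K}$ rather than to an orthogonal-complement class, which is harmless). The only imprecision is that \cite[Theorem~6.24]{AR94} does not by itself allow you to prescribe $\mu'\geq\mu$; as in the proof of Lemma \ref{lem-ho-idemp-is-L_f}, one must afterwards raise the presentation cardinal (using \cite[Corollary~2.14]{AR94}), which is precisely the adjustment you defer to in your final paragraph.
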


The following result generalizes \cite[Theorem~2.1]{Ch07} to cofibrantly generated model categories not necessarily locally presentable nor simplicial.

\begin{theorem}\label{cell1}
Assume that Vop\v{e}nka's principle holds. Let $\M$ be a
cofibrantly generated model category. If $(C,\varepsilon)$ is a homotopy augmented idempotent functor on $\M$, then there is a set of objects $D$ such that the class of $D$\nobreakdash-cellular equivalences coincides with the class of $C$\nobreakdash-cellular equivalences. Furthermore, if $\M$ is pointed, then we can take $D$ to consist of a single object. \qed
\end{theorem}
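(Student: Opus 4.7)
The plan is to mirror the proof of Corollary \ref{cor-cofib-gen1} throughout, dualizing each intermediate step from the coaugmented setting to the augmented one; the author's own remark that the proofs are easily reproduced makes this systematic dualization legitimate.

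First, I would establish the augmented analogue of Lemma \ref{lemma-eta}: for $(C,\varepsilon)$ an augmented homotopy idempotent functor on $\M$ and any objects $X$, $Y$, the maps $\calL^H C(CX,Y)\colon \map(CX,Y)\to\map(CCX,CY)$ and $\varepsilon_Y^\ast\colon\map(CX,CY)\to\map(CX,Y)$ are simplicial homotopy equivalences. The zig-zags of natural transformations in the proof of Lemma \ref{lemma-eta} can be written down with all arrows involving $\varepsilon$ reversed; the cylinder objects for $LX$, $LLY$ etc.\ are replaced by path objects for $CCX$, $CY$ etc., and Theorems \ref{thm} and \ref{thm-nat-transf} apply unchanged. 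From this, the augmented analogue of Proposition \ref{prop} follows by the same two-out-of-three argument: a morphism $f$ in $\M$ is a $C$-cellular equivalence if and only if $\map(X,f)$ is a weak homotopy equivalence for every $C$-cellular object $X$.

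Next, one proves the augmented analogues of Lemma \ref{tech-lem} and Lemma \ref{lem-ho-idemp-is-L_f} in the combinatorial setting: for any class $\D$ of objects in a combinatorial $\M$, the class of $\D$-cellular equivalences is closed under $\mu$-filtered colimits for some regular cardinal $\mu$, and under Vopěnka's principle the accessibility argument of \cite[Theorem~6.24]{AR94} applied to the arrow category then supplies a set $D\subset\D$ with the same cellular equivalences. In the pointed case, the dual of Corollary \ref{Cor-L_f} lets us collapse $D$ to the single object $\coprod_{X\in D}X$, using $\map(\coprod X_i,Z)\simeq\prod\map(X_i,Z)$ together with the pointed retraction trick. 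Combining these with the augmented analogue of Proposition \ref{prop} yields the statement for combinatorial $\M$, which is the augmented version of Theorem \ref{thm-ho-idemp-is-L_f}.

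Finally, the combinatorial hypothesis is removed by the augmented analogue of Proposition \ref{prop-cofib-gen}: given a homotopically surjective Quillen pair $F\colon\mathcal{N}\rightleftarrows\M\colon G$ with $\mathcal{N}$ combinatorial, a generating set $D'$ of cellular objects in $\mathcal{N}$ is transferred to $\M$ by taking $D=\{F(X^c)\mid X\in D'\}$, and the proof that $D$-cellular equivalences and $C$-cellular equivalences coincide proceeds by the same two-directional argument as in Proposition \ref{prop-cofib-gen}, now using the augmented version of Proposition \ref{prop} in place of the coaugmented one. Appealing to \cite[Theorem~1.1]{Ra09}, which under Vopěnka's principle produces such a Quillen-equivalent combinatorial $\mathcal{N}$ for every cofibrantly generated $\M$, completes the proof. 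The main obstacle is the augmented analogue of Lemma \ref{tech-lem}: unlike $\map(-,Z)$, the functor $\map(X,-)$ does not in general preserve $\mu$-filtered homotopy colimits unless $X$ is $\mu$-presentable, so one must use Vopěnka to bound the presentability ranks of a cofinal subclass of test objects before a single cardinal $\mu$ can be chosen.
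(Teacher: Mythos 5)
Your overall strategy---dualizing each ingredient of Corollary \ref{cor-cofib-gen1}---is exactly what the paper intends (it omits the proof as ``easily reproduced''), and your augmented versions of Lemma \ref{lemma-eta}, Proposition \ref{prop}, the pointed coproduct-and-retraction trick, and the final transfer along a homotopically surjective Quillen pair combined with Raptis' theorem are all the right dual statements (modulo the notational slip that the second map in your dual of Lemma \ref{lemma-eta} should be $\varepsilon_{Y*}$, postcomposition, rather than $\varepsilon_Y^\ast$). The genuine gap is in the central step, your dualization of Lemmas \ref{tech-lem} and \ref{lem-ho-idemp-is-L_f}. You take as the dual of Lemma \ref{tech-lem} the statement that the class of $\D$-cellular \emph{equivalences} is closed under $\mu$-filtered colimits, and you propose to obtain the set $D$ by applying \cite[Theorem~6.24]{AR94} ``to the arrow category''. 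The latter cannot produce a set of objects $D\subset\D$ (it produces a set of morphisms), and the former is precisely the statement you then rightly flag as problematic, since $\map(X,-)$ does not commute with filtered homotopy colimits for an arbitrary test object $X$; your proposed repair (``use Vop\v{e}nka to bound the presentability ranks of a cofinal subclass of test objects'') conflates categorical presentability, which is automatic in a locally presentable category and gives no control over $\map(X,-)$, with homotopical smallness, and in any case is not the argument that makes the dual work.

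The correct dualization avoids this obstacle entirely. Apply \cite[Theorem~6.24]{AR94} (under Vop\v{e}nka's principle) to the class $\D$ of $C$-cellular objects viewed as a full subcategory of $\M$ itself, obtaining, after enlarging cardinals as in Lemma \ref{lem-ho-idemp-is-L_f}, a regular cardinal $\mu$ and a set $D\subset\D$ such that every object of $\D$ is a $\mu$-filtered colimit of objects of $D$. The statement dual to Lemma \ref{tech-lem} is then about cellular \emph{objects}, not equivalences: for every class of morphisms $\mathcal{E}$, the class of objects $X$ such that $\map(X,f)$ is a weak homotopy equivalence for all $f\in\mathcal{E}$ is closed under $\mu$-filtered colimits. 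Its proof uses the very same input as the original, namely \cite[Theorem~19.4.4]{Hi03} applied in the contravariant variable, $\map(\hocolim_I X_i,A)\simeq\holim_I\map(X_i,A)$, together with the fact that homotopy limits preserve objectwise weak homotopy equivalences; no smallness of the test objects is needed. Given this, one inclusion is trivial because $D\subset\D$, and conversely every $X\in\D$ is a $\mu$-filtered colimit of objects of $D$, each of which is trivially orthogonal to all $D$-cellular equivalences, so $X$ is orthogonal to them as well; hence every $D$-cellular equivalence is a $\D$-cellular equivalence. With this corrected lemma, the rest of your outline (augmented Proposition \ref{prop}, the pointed case as in Corollary \ref{Cor-L_f}, and the reduction to the combinatorial case via Proposition \ref{prop-cofib-gen} and \cite[Theorem~1.1]{Ra09}) goes through as you describe.
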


\providecommand{\bysame}{\leavevmode\hbox to3em{\hrulefill}\thinspace}
\providecommand{\MRhref}[2]{%
  \href{http://www.ams.org/mathscinet-getitem?mr=#1}{#2}
}
\providecommand{\href}[2]{#2}

\end{document}